\newtheorem{theorem}{Theorem}
\newtheorem{lemma}{Lemma}
\title{Measurement-Feedback Control with \\ Optimal Data-Dependent Regret}
\author{Gautam Goel and Babak Hassibi
\thanks{Gautam Goel is with the Department of Computing and Mathematical Sciences at Caltech (e-mail: ggoel@caltech.edu). }
\thanks{Babak Hassibi is with the Department of Electrical Engineering at Caltech (e-mail: bhassibi@caltech.edu).}}
\date{}
\begin{document}
\maketitle

\begin{abstract}
    Inspired by online learning,  \textit{data-dependent regret} has recently been proposed as a criterion for controller design. In the regret-optimal control paradigm, causal controllers are designed to minimize regret against a hypothetical \textit{optimal noncausal controller}, which selects the globally cost-minimizing sequence of control actions given noncausal access to the disturbance sequence.
    We extend regret-optimal control to the more challenging \textit{measurement-feedback} setting, where the online controller must compete against the optimal noncausal controller without directly observing the state or the driving disturbance. We show that no measurement-feedback controller can have bounded competitive ratio or regret which is bounded by the pathlength of the measurement disturbance. We do derive, however, a controller whose regret has optimal dependence on the joint energy of the driving and measurement disturbances, and another controller whose regret has optimal dependence on the pathlength of the driving disturbance and the energy of the measurement disturbance. The key technique we introduce is a reduction from regret-optimal measurement-feedback control to $H_{\infty}$-optimal measurement-feedback control in a synthetic system. We present numerical simulations which illustrate the efficacy of our proposed control algorithms.
\end{abstract}

\section{Introduction}


Inspired by online learning,  \textit{data-dependent regret} has recently been proposed as a criterion for controller design \cite{goel2022regret}. In the regret-optimal control paradigm, causal controllers are designed to minimize regret against a hypothetical \textit{optimal noncausal controller}, which selects the globally cost-minimizing sequence of control actions given noncausal access to the disturbance sequence. Controllers with low regret retain a performance guarantee relative to this strong benchmark irrespective of how the disturbance is generated; it is this universality which makes regret-optimal controllers an attractive alternative to traditional $H_2$ and $H_{\infty}$ controllers, which instead posit a specific disturbance-generating mechanism. The regret of the causal controller is bounded by some measure of the complexity of the disturbance sequence; several different complexity measures have been proposed, including the energy of the disturbance sequence, which measures the size of the disturbance, and the pathlength of the disturbance, which measures its variation over time. The alternative metric of \textit{competitive ratio}, which is the worst-case ratio between the cost incurred by the causal controller and the cost incurred by the optimal noncausal controller, has also been proposed as a performance objective \cite{goel2019beyond}. This metric can also be viewed as a special case of data-dependent regret, where the complexity measure is simply the offline optimal cost. 

In a series of papers, Goel and Hassibi showed that for each of these complexity measures, the corresponding full-information controller can be computed via a reduction to full-information $H_{\infty}$ control \cite{goel2021competitive,  goel2022online, goel2021regret,}. In this paper we consider the more challenging problem of regret-optimal measurement-feedback control. In this setting, the causal controller is unable to directly observe the state or disturbance, and instead only has access to noisy linear observations of the state. As a result, the controller must not only choose which control actions to select, but must maintain and update an estimate of the state as new measurements arrive. In the $H_2$ setting it is well-known that the measurement-feedback problem can be decomposed into a stochastic full-information optimal control problem and a stochastic filtering problem; the fact that stochastic control algorithm and the filtering algorithm can be designed separately is usually called the Separation Principle. The $H_{\infty}$-optimal measurement-feedback controller can also be reduced to a filtering and full-information control problem, but the filtering and control algorithms are strongly coupled \cite{hassibi1999indefinite}. Intuitively, this is because an adversary which can optimize over the driving disturbance and the measurement noise simultaneously is strictly more powerful than one which must choose the disturbances separately. 

We present the first characterization of measurement-feedback controllers with optimal regret against the optimal noncausal controller.  We show that general linear systems do not admit measurement-feedback controllers with bounded competitive ratio or regret which is bounded by the pathlength of the measurement disturbance. We do, however, derive two new regret-optimal measurement-feedback controllers, the first of which attains regret with optimal dependence on the 
joint energy of the driving disturbance and the measurement disturbance, and the second of which attains regret with optimal dependence on the 
pathlength of the driving disturbance and the energy of the measurement disturbance. In both cases we reduce the problem of finding a controller with optimal regret in the original system to the problem of finding an $H_{\infty}$-optimal measurement-feedback controller in a synthetic system. We present numerical simulations which demonstrate that our regret-optimal controllers are competitive with standard $H_2$ and $H_{\infty}$ controllers.

\section{Related Work}
The problem of obtaining a controller whose regret has optimal dependence on the energy of the disturbance was studied in \cite{goel2021regret, sabag2021regret}. 
Recently, \cite{didier2022system} and \cite{martin2022safe} described connections between regret-optimal control and the System-Level-Synthesis (SLS) framework \cite{anderson2019system}, and used these connections to obtain regret-minimizing controllers that satisfy safety constraints. We also note the paper \cite{goel2022online}, which obtained a controller whose  regret has optimal dependence on the pathlength of the disturbance.
The problem of designing a controller with bounded competitive ratio was first studied in \cite{goel2019online}, and those results were extended in \cite{goel2019beyond, shi2020online}. A controller with optimal competitive ratio was obtained in \cite{goel2021competitive}. We also note a parallel line of work \cite{agarwal2019online, simchowitz2020making} which studies control through the weaker metric of \textit{policy regret}; this metric compares the performance of the online controller to the best controller selected in hindsight from a parametric class of controllers, such as the class of state-feedback controllers. We note that this benchmark is strictly weaker than the the optimal noncausal controller we consider in this paper, as established in \cite{goel2020power}.

All of these aforementioned works focus on the full-information control setting; \cite{goel2021measurement} was the first to study dynamic regret minimization in the measurement-feedback setting. In this paper, we focus on obtaining a measurement-feedback controller which minimizes regret against a clairvoyant, globally-optimal noncausal controller which observes the actual driving disturbance without any measurement noise; the paper \cite{goel2021measurement} instead considers the problem of minimizing regret against a weaker $H_2$-optimal noncausal benchmark.

\section{Preliminaries}
We consider a discrete-time LTI system that evolves according to the dynamics $$x_{t+1} = Ax_t + B_u u_t + B_w w_t,$$ where $x_t \in \mathbb{R}^n$ is the state, $u_t \in \mathbb{R}^m$ is the control input, $w_t \in \mathbb{R}^p$ is an exogenous disturbance, and $A, B_u, B_w$ are matrices of compatible dimensions. In each timestep, the controller receives a noisy linear measurement $y_t$ of the current state, where $y_t =  C x_t + v_t$ and  $C \in \mathbb{R}^{r \times n}$ and $v_t \in \mathbb{R}^r$ is a \textit{measurement disturbance}.
The controller also incurs a quadratic cost $$x_t^* Q x_t + u_t^* R u_t $$ in each timestep; it is convenient to assume that the dynamics are scaled so that $R = I$.  Our goal is to design a controller which, given the measurements $y$, selects a control action $u$ so as to minimize the dynamic regret against a clairvoyant offline optimal controller which can observe the full sequence of disturbances $w$ ahead of time.  Define $L = Q^{1/2}$, $s_t = L x_t$. The dynamics and observation model can be neatly expressed $$ s = Fu + Gw, \hspace{3mm} y = Hu + Jw + v,$$ where $F$ and $G$ are the strictly causal transfer operators mapping $u$ and $w$ to $s$ and $H$ and $J$ are the strictly causal transfer operators mapping $u$ and $w$ to $y$.
We restrict our attention to control policies which are a causal linear function of the observations $y$, i.e., policies which set $u = Ky$ for some causal operator $K$. Solving for $y$, we see that $$y = (I - HK)^{-1}(Jw + v), $$ implying that $$u =  K(I - HK)^{-1}(Jw + v).$$ We introduce the Youla parameterization $Q = K(I - HK)^{-1}$; we can easily recover $K$ from $Q$ by setting $K =  (I + QH)^{-1}Q$. To each $K$, we associate the transfer operator  $$T_K: \begin{bmatrix} w \\ v \end{bmatrix} \rightarrow \begin{bmatrix} s \\ u \end{bmatrix} $$ given by 
\begin{equation} \label{transfer-operator}
T_K = \begin{bmatrix} G & 0 \\ 0 & 0 \end{bmatrix}  + \begin{bmatrix}F \\ I \end{bmatrix} Q \begin{bmatrix} J & I \end{bmatrix}.
\end{equation}
Recall that the optimal noncausal controller selects the control $u = K_0 w$, where $$K_0 = -(I + F^*F)^{-1}F^*G.$$ The transfer operator associated to the optimal noncausal controller is therefore  $$T_{K_0} = \begin{bmatrix} -F(I + F^*F)^{-1}F^*G + G & 0 \\ -(I + F^*F)^{-1}F^*G & 0 \end{bmatrix}. $$
The zeros in the second column represent the fact that the optimal noncausal controller observes the actual disturbance $w$ and the control signal it selects is not at all affected by the measurement noise $v$; it is this disparity between the information available to the online and offline controllers which makes regret-optimal measurement-feedback control considerably more challenging than regret-optimal full-information control. 

The \textit{regret} of a causal controller $K$ against the optimal noncausal controller $K_0$  on the instance $(w, v)$ is simply the difference in their costs: $$\begin{bmatrix} w \\ v \end{bmatrix}^* (T_K^*T_K - T_{K_0}^*T_{K_0}) \begin{bmatrix} w \\ v \end{bmatrix}.$$ Similarly, the competitive ratio of $K$ is the worst-case ratio of their costs: $$\sup_{w, v} \, \frac{\begin{bmatrix} w \\ v \end{bmatrix}^* T_K^*T_K  \begin{bmatrix} w \\ v \end{bmatrix}}{\begin{bmatrix} w \\ v \end{bmatrix}^* T_{K_0}^*T_{K_0} \begin{bmatrix} w \\ v \end{bmatrix}}. $$
Recall that the energy of a signal $w$ is  $\|w\|^2$, while the pathlength of $w$ is $\sum_{t=-\infty}^{\infty} \|w_t - w_{t-1}\|^2$.
The goal of this paper is to completely characterize the conditions under which it is possible to obtain a controller with bounded competitive ratio, or regret bounded by the energy or pathlength of $w$ and $v$. In those cases where it is possible to obtain a controller with bounded regret, we derive controllers with the tightest possible regret bounds.

\section{Non-existence results} \label{non-existence-sec}

We first establish that in unstable systems there is no controller with bounded competitive ratio; in stable systems the only such controller is the controller which always sets $u = 0$. The key observation is that the competitive ratio bound implies that the online controller must incur zero cost on every disturbance sequences on which the optimal noncausal controller incurs zero cost. In particular, in order to have a bounded competitive ratio, a controller must always set the control $u$ to be zero when $w = 0$. In the measurement-feedback setting the controller is unable to directly observe $w$, and hence must set $u = 0$ at all times. The only setting under which this controller can be competitive is when the system is stable.

\begin{theorem}
Fix $\gamma > 0$ and suppose $A$ is unstable. There does not exist a causal measurement-feedback controller $\pi$ such that 
\begin{equation} \label{competitive-measurement-feedback-cond}
J(\pi, w, v) < \gamma^2 J(\pi_0, w)
\end{equation}
for all driving disturbances $w$ and and all measurement disturbances $v$. If $A$ is stable, then the only competitive controller is the controller which always sets $u = 0$. This controller has competitive ratio $1 + \|F\|^2$.
\end{theorem}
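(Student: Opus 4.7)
My plan is to exploit the degenerate behavior of \eqref{competitive-measurement-feedback-cond} at $w = 0$. Since $K_0 = -(I+F^*F)^{-1}F^*G$ vanishes on the zero driving disturbance, the offline cost satisfies $J(\pi_0, 0) = 0$, so any controller whose competitive ratio is bounded by $\gamma^2$ must achieve $J(\pi, 0, v) \leq \gamma^2 \cdot 0 = 0$ for \emph{every} measurement disturbance $v$. Because $J$ is a sum of squared norms of $s$ and $u$, this forces the closed-loop control signal $u$ to vanish identically whenever $w = 0$, irrespective of $v$.

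Next I would translate this pointwise vanishing into a structural constraint on $K$. Solving the closed loop in the Youla parameterization gives $u = Q(Jw + v)$ with $Q = K(I - HK)^{-1}$, and at $w = 0$ the requirement $Qv = 0$ for every $v$ forces $Q = 0$, and hence $K = (I + QH)^{-1}Q = 0$. Thus the only candidate competitive measurement-feedback controller, regardless of the stability of $A$, is the zero controller $u \equiv 0$. This is the step that makes the measurement-feedback setting qualitatively different from the full-information one: in full information the controller can read $w$ and zero out its contribution, but here it cannot separate $w$ from $v$ through $y$.

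The remainder splits by stability. If $A$ is unstable, I would exhibit a driving disturbance $w$ that excites an unstable mode visible through $Q = L^*L$ (for instance, a single impulse aligned with an unstable eigendirection of $A$), so that the open-loop cost $\|Gw\|^2$ diverges while the clairvoyant controller stabilizes the system through $u$ at finite cost; this contradicts \eqref{competitive-measurement-feedback-cond} for any finite $\gamma$. If $A$ is stable, $u \equiv 0$ is well-defined and yields $J(\pi, w, v) = \|Gw\|^2$, while the matrix-inversion identity $I - F(I+F^*F)^{-1}F^* = (I + FF^*)^{-1}$ gives $J(\pi_0, w) = w^* G^* (I + FF^*)^{-1} Gw$. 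Setting $z = Gw$, the worst-case ratio reduces to $\sup_z \|z\|^2 / z^*(I + FF^*)^{-1}z = \|I + FF^*\| = 1 + \|F\|^2$, matching the claimed competitive ratio.

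The main conceptual hurdle I anticipate is the Youla-parameterization step from the pointwise requirement $u(w=0, v) \equiv 0$ to $K = 0$, since this is what simultaneously rules out every nontrivial measurement-feedback controller and pins down the zero controller as the unique candidate. Once this is in place, the two case-by-case conclusions reduce to a standard impulse construction (unstable case) and a direct worst-case operator-norm computation (stable case).
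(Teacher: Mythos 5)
Your proposal is correct and takes essentially the same route as the paper: the instance $w=0$ with arbitrary $v$ (whose offline cost is zero) forces any competitive measurement-feedback controller to be the zero controller, and your stable-case computation $\sup_z \|z\|^2/z^*(I+FF^*)^{-1}z = 1+\|F\|^2$ matches the paper's calculation via the factorization $I+FF^*=\Delta\Delta^*$. The only minor difference is the unstable case, where the paper simply observes that the zero controller's ratio $1+\|F\|^2$ is infinite because $F$ is unbounded when $A$ is unstable, whereas you exhibit an impulse exciting an unstable mode directly; both arguments rest on the same implicit stabilizability/detectability assumptions.
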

\begin{proof}
Suppose there exists  a measurement $y_0$ such that $\pi(y_0) \neq 0$. Set $w = 0$, $v = y_0$. The online controller incurs positive cost, but the offline optimal cost is zero, contradicting (\ref{competitive-measurement-feedback-cond}). Now suppose that the controller always sets $u = 0$, irrespective of $y_0$.  In this case, the competitive ratio is simply $$\sup_{w \in \ell_2} \frac{\left\| Gw\right\|^2}{\|\Delta^{-1}Gw\|^2},$$ where  $\Delta$ is defined in Lemma \ref{canonical-factorization-lemma}. Let $w_2 = \Delta^{-1}Gw$. The competitive ratio becomes 
\begin{eqnarray*}
\sup_{w_2 \in \ell_2} \frac{\left\| \Delta w_2\right\|^2}{\|w_2\|^2} &=& \sup_{w_2 \in \ell_2} \frac{\left\| \Delta^* w_2\right\|^2}{\|w_2\|^2}\\
&=& \sup_{w_2 \in \ell_2} \frac{w_2^*(I + FF^*)w_2}{\|w_2\|^2} \\
&=& 1 + \sup_{w_2 \in \ell_2} \frac{w_2^*FF^*w_2}{\|w_2\|^2}\\
&=& 1 + \|F\|^2.
\end{eqnarray*}
It is easy to check that $F$ is bounded if and only if $A$ is stable.
\end{proof}

We next prove that there is no controller whose regret bounded by the joint pathlength of the driving disturbance and the measurement disturbance; this non-existence result holds for all linear systems, stable or unstable. The key observation is that the regret bound implies that the online controller must incur zero cost whenever $w$ and $v$ are constant. In the measurement-feedback setting the controller is unable to directly observe $w$ and $v$, and hence must set $u = 0$ at all times. However, there exist choices of $w$ and $v$ with zero pathlength such that this controller incurs positive regret, contradicting the pathlength bound.

\begin{theorem}
Fix $\gamma > 0$. There does not exist a causal controller $\pi$ such that 
\begin{equation} \label{pathlength-w-pathlength-v-cond}
J(\pi, w, v) - J(\pi_0, w) < \gamma^2\left( \|D_p w\|_2^2 + \|D_r v\|_2^2 \right)
\end{equation}
for all driving disturbances $w$ and and all measurement disturbances $v$.
\end{theorem}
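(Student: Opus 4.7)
My plan mirrors the two-step template of Theorem~1's proof, exploiting that constant (or long-step) signals have zero pathlength and therefore make the right-hand side of (\ref{pathlength-w-pathlength-v-cond}) arbitrarily small. The first step forces any $\pi$ satisfying the bound to output $u \to 0$ in response to any constant measurement; the second step shows that the resulting controller is itself beaten by the offline optimum on a long step driving disturbance, yielding the desired contradiction.

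For the first step, I would take $w = 0$ and the truncated step $v_t = \bar{v}$ for $0 \le t \le T$ and $v_t = 0$ otherwise, so that $\|D_p w\|^2 = 0$, $\|D_r v\|^2 = 2\|\bar{v}\|^2$, and $J(\pi_0, 0) = 0$. The bound then demands $J(\pi, w, v) < 2\gamma^2\|\bar{v}\|^2$ uniformly in $T$. But any sustained nonzero response $u$ to the constant observation $\bar{v}$ makes $\|u\|^2$ alone grow linearly in $T$, violating this uniform cap; hence $\pi$ must drive $u$ to zero on every constant observation. For the second step I take $v = 0$ and the truncated step $w_t = \bar{w}$ for $0 \le t \le T$ and zero otherwise, which again has $T$-independent pathlength $2\|\bar{w}\|^2$. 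Since the steady-state measurement $y = J\bar{w}$ is constant, the controller is forced to produce $u \to 0$ by the first step, so the online cost approximates $\|Gw\|^2$, which grows linearly in $T$. The offline optimum $K_0 w = -(I + F^*F)^{-1}F^*Gw$ strictly reduces the cost per step whenever the plant admits any control authority at DC (the generic case), so the regret is $\Theta(T)$ and exceeds the $O(1)$ pathlength bound for large $T$.

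The main obstacle is rigorizing the ``steady-state'' reasoning, since the truly constant signals that vanish the pathlength lie outside $\ell_2$. I would handle this by using the truncated steps as genuine $\ell_2$ inputs, bounding the transient contributions from the step edges by an $O(1)$ quantity, and showing that the linear-in-$T$ bulk contribution from the middle of the window dominates the $T$-independent right-hand side. In the degenerate case where $F^*G$ happens to vanish at DC, I would instead choose $w$ whose frequency content is concentrated near but not at $\omega = 0$, making $\|D_p w\|^2$ arbitrarily small while keeping the per-step regret bounded below and thereby recovering the contradiction for all plants.
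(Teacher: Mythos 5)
Your main argument is, at its core, the same two-step reduction the paper itself uses: first force the controller to be (asymptotically) inactive by feeding it measurement signals of negligible pathlength with $w=0$, then exhibit a (near-)constant driving disturbance on which an inactive controller suffers regret that the pathlength right-hand side cannot absorb. The paper does this bluntly with exactly constant $w$ and $v$ (zero pathlength, so any positive regret contradicts (\ref{pathlength-w-pathlength-v-cond})), whereas you use truncated steps in $\ell_2$ and compare growth rates ($\Theta(T)$ regret against an $O(1)$ right-hand side); that is a more honest rendering of the same idea. One point to tighten: the conclusion of your first step should be stated as a property of the closed-loop map $Q = K(I-HK)^{-1}$ (its DC gain must vanish), not as ``$u\to 0$ in response to any constant observation,'' because the observation the controller actually sees is $y = Hu + Jw + v$, which contains the feedback term and in your second step is only eventually constant; for the linear policies the paper restricts to, this repair is immediate and the hand-off between the two steps goes through.

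The genuine gap is your final paragraph. When $F^*G$ vanishes at DC --- the case the paper's own proof silently excludes by asserting that a constant $w$ with $F^*Gw \neq 0$ exists --- concentrating $w$ near a small frequency $\omega_0$ does not yield a contradiction: the per-step pathlength is of order $|1-e^{i\omega_0}|^2\|\bar w\|^2 \sim \omega_0^2\|\bar w\|^2$, while the per-step regret of even the zero controller at that frequency, $\bar w^* G^*F(I+F^*F)^{-1}F^*G\,\bar w$ evaluated at $z = e^{i\omega_0}$, is itself $O(\omega_0^2)$ when $F^*G$ vanishes at $z=1$; moreover, your first step only pins the controller down at DC, so at $\omega_0 \neq 0$ it is free to act and reduce regret further. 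Hence the regret-to-pathlength ratio stays bounded and a sufficiently large $\gamma$ defeats the construction; in extreme degenerate cases (e.g.\ $B_u = 0$, where the regret is identically zero) the claimed non-existence itself fails, so no choice of test signals can rescue the assertion ``for all plants.'' If you drop that paragraph and impose the same non-degeneracy the paper implicitly assumes (some constant $\bar w$ with $F(1)^*G(1)\bar w \neq 0$), your argument coincides with the paper's and is sound modulo the transient bookkeeping you already acknowledge.
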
 
\begin{proof}

Suppose there exists  a measurement $y_0$ such that $\pi(y_0) \neq 0$. Set $w = 0$, $v = y_0$. The online controller incurs positive cost, but the offline optimal cost is zero, hence the online controller incurs positive regret. This contradicts the fact that the right-hand side of (\ref{pathlength-w-pathlength-v-cond}) is zero. Now suppose that the controller always sets $u = 0$, irrespective of $y_0$. The left-hand side of (\ref{pathlength-w-pathlength-v-cond}) is 
\begin{align*} 
\left\| Gw\right\|^2 - \|\Delta^{-1}Gw\|^2 &= w^*G^*(I - (I + FF^*)^{-1})Gw \\
&= w^*G^*F(I + F^*F)^{-1}F^*Gw,
\end{align*}
where  $\Delta$ is defined in Lemma \ref{canonical-factorization-lemma}. Set $w$ to be a nonzero constant sequence such that $F^*Gw \neq 0$. Set $v = 0$. Then the left-hand side of (\ref{pathlength-w-pathlength-v-cond}) is strictly positive, but the right-hand side is zero for all $\gamma$.
\end{proof}

We now prove that in unstable systems there is no controller whose regret is bounded by the energy of the driving disturbance and the pathlength of the measurement disturbance; in stable systems the only such controller is the controller which always sets $u = 0$.
The key observation is that the regret bound implies that the online controller must incur zero cost whenever $w = 0$ and $v$ is constant. In the measurement-feedback setting the controller is unable to directly observe $w$ and $v$, and hence must set $u = 0$ at all times.

\begin{theorem}
Fix $\gamma > 0$ and suppose $A$ is unstable. There does not exist a causal controller $\pi$ such that
\begin{equation} \label{energy-w-pathlength-v-cond}
J(\pi, w, v) - J(\pi_0, w) < \gamma^2\left( \|w\|_2^2 + \|D_r v\|_2^2 \right)
\end{equation}
for all driving disturbances $w$ and and all measurement disturbances $v$. If $A$ is stable, then the only controller which satisfies (\ref{energy-w-pathlength-v-cond}) for any value of $\gamma$ is the ``zero controller'' which always sets $u = 0$. This controller satisfies (\ref{energy-w-pathlength-v-cond}) with $\gamma = \|\Delta_2^{-*}F^*G\|$, where $\Delta_2$ is the unique causal and causally invertible operator such that $\Delta_2^*\Delta_2 = I + F^*F$.
\end{theorem}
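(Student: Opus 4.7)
The plan is to follow the template of the preceding two theorems: reduce any admissible controller to the zero controller, then compute the zero controller's regret and characterize when the bound \eqref{energy-w-pathlength-v-cond} holds. For the reduction, suppose a causal $\pi$ satisfies \eqref{energy-w-pathlength-v-cond} but there is a constant measurement sequence $y_0$ on which $\pi$ produces a nonzero control. Taking $w = 0$ and $v = y_0$ forces $\|w\|_2^2 = 0$ and $\|D_r v\|_2^2 = 0$, so the right-hand side of \eqref{energy-w-pathlength-v-cond} vanishes; yet the offline controller plays $u = K_0 \cdot 0 = 0$ and incurs zero cost while the online controller incurs strictly positive cost, a contradiction. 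Hence $\pi$ must be the zero controller.

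With $u \equiv 0$ the online cost is $\|Gw\|^2$ and the offline cost is $\|\Delta^{-1}Gw\|^2$. Using the push-through identity $F^*(I+FF^*)^{-1} = (I+F^*F)^{-1}F^*$ together with the factorization $\Delta_2^*\Delta_2 = I + F^*F$, the zero controller's regret, which does not depend on $v$, simplifies to
\begin{align*}
\|Gw\|^2 - \|\Delta^{-1}Gw\|^2 &= w^* G^* F(I+F^*F)^{-1} F^* G w \\
&= \|\Delta_2^{-*}F^*Gw\|^2.
\end{align*}

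When $A$ is stable, $F$ and $G$ are bounded on $\ell_2$, and $\Delta_2^*\Delta_2 \succeq I$ gives $\|\Delta_2^{-*}\| \leq 1$, so $\|\Delta_2^{-*}F^*G\|$ is finite; taking $\gamma = \|\Delta_2^{-*}F^*G\|$ bounds the regret by $\gamma^2\|w\|_2^2 \leq \gamma^2(\|w\|_2^2 + \|D_r v\|_2^2)$, proving the stable half of the theorem. When $A$ is unstable, I would exhibit a finite-energy $w$ that excites an unstable mode: concretely, choose an impulse $w_0$ so that $B_w w_0$ lies in an unstable-eigendirection of $A$ that is visible through $L$ (which exists under the standard stabilizability and detectability assumptions), and set $w_t = 0$ otherwise. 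The zero controller then produces $x_t$ growing geometrically in $t$, so its cost $\|Gw\|^2$ is infinite, while the offline controller can use its noncausal knowledge of $w$ to neutralize the impulse at time $0$ with finite control effort, making the offline cost finite. Thus the regret is infinite whereas $\gamma^2(\|w\|_2^2 + \|D_r v\|_2^2)$ is finite, ruling out every finite $\gamma$.

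The main subtlety is the unstable case: a priori, the online and offline costs could both diverge and leave a finite difference. The key observation that removes this concern is that an impulse exciting an unstable mode can be canceled by a single instantaneous control action at time $0$, keeping the offline cost finite while the open-loop online cost grows geometrically, so that their difference must be infinite.
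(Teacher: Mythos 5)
Your proof follows the paper's own structure for the reduction and for the stable case: you rule out any controller that reacts to a constant measurement sequence by taking $w=0$ and $v$ constant (so the right-hand side vanishes while the online cost is positive), and you then compute the zero controller's regret as $w^*G^*F(I+F^*F)^{-1}F^*Gw = \|\Delta_2^{-*}F^*Gw\|^2$, which gives $\gamma = \|\Delta_2^{-*}F^*G\|$ exactly as in the paper (and, like the paper, you pass from ``$u=0$ on every constant measurement sequence'' to ``$\pi$ is the zero controller'' without further comment). Where you genuinely diverge is the unstable case: the paper disposes of it with the one-line claim that the operator $\Delta_2^{-*}F^*G$ is bounded if and only if $A$ is stable, whereas you exhibit an explicit witness, an impulse whose open-loop cost is infinite while the offline cost stays finite, so the regret is infinite against a finite right-hand side. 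This constructive route is legitimate and arguably more transparent, but it has two soft spots. First, the existence of your impulse does not follow from the assumptions you invoke: stabilizability of $(A,B_u)$ and detectability of $(A,L)$ make the unstable modes controllable through $B_u$ and observable through $L$, but say nothing about excitability through $B_w$; you need the unstable dynamics to actually appear in $G$ (the paper's ``if and only if'' tacitly assumes the same, so you are no worse off, but the hypothesis should be named rather than attributed to the standard assumptions). Second, ``neutralize the impulse at time $0$ with a single instantaneous control action'' requires $B_w w_0 \in \mathrm{range}(B_u)$, which need not hold; the clean justification that the offline cost is finite is that it equals $\|\Delta^{-1}Gw\|^2$ with $\Delta^{-1}(z)G(z) = \Sigma^{-1/2}L\bigl(zI-(A-KL)\bigr)^{-1}B_w$, whose poles are eigenvalues of the stable matrix $A-KL$, so it is finite for any finite-energy $w$ (alternatively, stabilizability of $(A,B_u)$ already yields a finite-cost noncausal response). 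With those two repairs your unstable-case argument is sound and complements the paper's operator-norm statement.
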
 

\begin{proof}
Suppose there exists  a measurement $y_0$ such that $\pi(y_0) \neq 0$. Set $w = 0$, $v = y_0$. The online controller incurs positive cost, but the offline optimal cost is zero, hence the online controller incurs positive regret. This contradicts the fact that the right-hand side of (\ref{energy-w-pathlength-v-cond}) is zero. Now suppose that the controller always sets $u = 0$, irrespective of $y_0$. The left-hand side of (\ref{energy-w-pathlength-v-cond}) is 
\begin{align*}
\left\| Gw\right\|^2 - \|\Delta^{-1}Gw\|^2 &= w^*G^*(I - (I + FF^*)^{-1})Gw \\
&= w^*G^*F(I + F^*F)^{-1}F^*Gw,
\end{align*}  where  $\Delta$ is defined in Lemma \ref{canonical-factorization-lemma}. The smallest possible value of $\gamma^2$ such that (\ref{energy-w-pathlength-v-cond}) holds is given by $$ \sup_{w \in \ell_2} \frac{w^*G^*F(I + F^*F)^{-1}F^*Gw}{\|w\|^2} = \|\Delta_2^{-*}F^*G\|^2, $$
where $\Delta_2$ is the unique causal and causally invertible operator such that $\Delta_2^*\Delta_2 = I + F^*F$. We note that the operator $\Delta_2^{-*}F^*G$ is bounded if and only if $A$ is stable.
\end{proof}

\section{Regret bounded by the joint energy of $w$ and $v$} \label{energy-sec}

In this section we describe a causal controller $\pi$ whose regret is bounded by the joint energy of the driving disturbance and the measurement disturbance. This presents a sharp contrast with the negative results of the previous sections; the key difference is that the only pair of disturbances $(w, v)$ whose joint energy is zero is simply $w = 0, v = 0$. It is easy to guarantee that $K$  sets $u  = 0$ on this specific instance without also requiring that $K$ sets $u = 0$ on all other instances. We derive the measurement-feedback controller whose regret has optimal dependence on the joint energy of the driving disturbance and the measurement disturbance via a reduction to $H_{\infty}$ measurement-feedback control.

\begin{theorem}
Fix $\gamma > 0$ and define $\widehat{A}, \widehat{B}_u, \widehat{B}_w, \widehat{C}, \widehat{L}$ as in 
(\ref{energy-w-energy-v-measurement-feedback-new-dynamics}). There exists a causal measurement-feedback controller $\pi$ such that
\begin{equation} \label{energy-w-energy-v-regret-cond}
J(\pi, w, v) - J(\pi_0, w)  < \gamma^2(\|w\|_2^2 + \| v\|_2^2)    
\end{equation}
if and only if the control DARE $$P_c = \widehat{A}^*P_c\widehat{A} + \widehat{L}^*\widehat{L} - K_c^* R_c K_c, $$ and the estimation DARE $$P_e = \widehat{A}P_e \widehat{A}^* +    \widehat{B}_w \widehat{B}_w^* - K_e R_e K_e^*,$$ where we define 
$$\begin{cases}
K_c = R_c^{-1}\begin{bmatrix} \widehat{B}_u^* \\ \widehat{B}_w^* \end{bmatrix}P_c \widehat{A} \\
\\
R_c = \begin{bmatrix} I_m & 0 \\ 0 & -I_p \end{bmatrix} + \begin{bmatrix} \widehat{B}_u^* \\    \widehat{B}_w^* \end{bmatrix}P_c \begin{bmatrix} \widehat{B}_u &    \widehat{B}_w \end{bmatrix}, \\
\\
K_e = \widehat{A}P^d \begin{bmatrix}  \widehat{C} & L \end{bmatrix}R_e^{-1}, \\
\\
R_e = \begin{bmatrix} I_r & 0 \\ 0 & -I_n \end{bmatrix} + \begin{bmatrix}  \widehat{C} \\ \widehat{L} \end{bmatrix}P_c \begin{bmatrix}  \widehat{C}^* & \widehat{L}^* \end{bmatrix}, 
\end{cases} $$  have solutions $P_c \succeq 0$ and $P_e \succeq 0$ such that 
\begin{enumerate}
    \item The matrix $\widehat{A} - \begin{bmatrix} \widehat{B}_u &    \widehat{B}_w \end{bmatrix}K_c$ is stable.
    \item The matrix $R_c$ has $m$ positive eigenvalues and $p$ negative eigenvalues.
    \item The matrix $\widehat{A} - K_e\begin{bmatrix}  \widehat{C} \\ \widehat{L} \end{bmatrix}$ is stable.
    \item The matrix $R_e$ has $r$ positive eigenvalues and $n$ negative eigenvalues.
    \item $\rho(P_c P_e) < 1$.
\end{enumerate}
If these conditions are satisfied, then one possible choice of $\pi$ is given by $$u_t = - K_u (\xi_t + P \widehat{C}^*(I_r +  \widehat{C} P \widehat{C}^*)^{-1}(y_t - \widehat{C} \xi_t)),$$ where the synthetic state $\xi \in \mathbb{R}^{2n}$ evolves according to the linear dynamics equation $$\xi_{t+1} = (\widehat{A} - \widehat{B}_wK_w)(\xi_t + P \widehat{C}^*(I_r +  \widehat{C} P \widehat{C}^*)^{-1}(y_t - \widehat{C} \xi_t)) + \widehat{B}_u u_t.$$ The matrices $K_u \in \mathbb{R}^{m \times 2n}$ and $K_w \in \mathbb{R}^{p \times 2n}$ are defined as $$\begin{bmatrix} K_u \\ K_w \end{bmatrix} = K_c,$$ and we define $P \in \mathbb{R}^{2n \times 2n}$ as $$ P = P_e(I_n - P_c P_e)^{-1}. $$
\end{theorem}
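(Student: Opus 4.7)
The plan is to reduce the regret-bound condition to a suboptimal measurement-feedback $H_{\infty}$ problem in a synthetic augmented state-space system, and then invoke the classical DARE-based $H_{\infty}$ synthesis theorem for that synthetic system. Both directions of the iff will fall out of the reduction together with the known necessary-and-sufficient conditions for measurement-feedback $H_{\infty}$ synthesis.

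First, I would rewrite the desired inequality $J(\pi,w,v) - J(\pi_0,w) < \gamma^2(\|w\|^2+\|v\|^2)$ for all $(w,v)$ as the operator inequality $T_K^*T_K \prec \gamma^2 I + T_{K_0}^*T_{K_0}$. Because $T_{K_0}$ has zero entries in the columns acting on $v$, the right-hand side is block diagonal in $(w,v)$ and strictly positive. I would then compute a canonical (causal and causally invertible) spectral factorization $\Lambda^*\Lambda = \gamma^2 I + T_{K_0}^*T_{K_0}$. This reduces to a canonical factorization of a shift of $I+F^*F$, which (as in the canonical factorization lemma cited in the excerpt) is given by a discrete-time Riccati equation of dimension $n$; it is precisely this Riccati equation that supplies the extra $n$ states which augment the plant to dimension $2n$. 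Setting $\widetilde T_K := T_K\Lambda^{-1}$, the regret condition becomes the pure $H_{\infty}$ suboptimality condition $\|\widetilde T_K\|_\infty < 1$, and since $\Lambda^{-1}$ is causal the class of causal measurement-feedback $K$'s in the original plant is in bijection with that of the synthetic plant.

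Next I would write $\widetilde T_K$ as the closed-loop transfer operator of an LTI system with state $\xi\in\mathbb{R}^{2n}$ concatenating the original state with the state of $\Lambda^{-1}$. Grinding through this state augmentation yields the synthetic data $(\widehat A,\widehat B_u,\widehat B_w,\widehat C,\widehat L)$ used in the theorem; because $\Lambda^{-1}$ acts only on the disturbance channel and the measurement noise $v$ passes through unchanged, the synthetic measurement equation retains the form $\widehat C\xi+v$. I would then invoke the standard discrete-time measurement-feedback $H_{\infty}$ synthesis theorem in the indefinite-quadratic form of Hassibi--Sayed--Kailath: a causal internally-stabilizing measurement-feedback controller achieving $\|\widetilde T_K\|_\infty<1$ exists iff a control DARE and an estimation DARE each have stabilizing solutions with $R_c,R_e$ having the stated inertia and $\rho(P_cP_e)<1$, and the central controller is given by the observer-plus-gain formula with coupling $P=P_e(I-P_cP_e)^{-1}$. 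Specializing to $(\widehat A,\widehat B_u,\widehat B_w,\widehat C,\widehat L)$ produces conditions (1)--(5) and the explicit controller formula stated in the theorem, verbatim.

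The main obstacle is the bookkeeping in the second step: explicitly realizing $\Lambda^{-1}$ in state space and composing it with the original plant to verify that the resulting closed-loop system has precisely the block-matrix data $(\widehat A,\widehat B_u,\widehat B_w,\widehat C,\widehat L)$. This requires combining the Riccati equation underlying the canonical factorization of $\gamma^2 I+T_{K_0}^*T_{K_0}$ with the original dynamics and carefully tracking the measurement and cost channels. Once this state-augmentation is done correctly, both directions of the iff and the explicit form of $\pi$ follow directly from the off-the-shelf measurement-feedback $H_{\infty}$ theorem, with no further hand calculation required.
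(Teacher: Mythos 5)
Your proposal follows essentially the same route as the paper: rewrite the regret bound as $T_K^*T_K \prec \gamma^2 I + T_{K_0}^*T_{K_0}$, spectrally factor the block-diagonal right-hand side (a causal, causally invertible $\Delta_2$ on the $w$-channel, $\gamma I$ on the $v$-channel) via Riccati-based canonical factorization, absorb the factor to get an equivalent $H_\infty$ suboptimality condition $\|T_{\widehat K}\|<1$ for a $2n$-dimensional synthetic plant, and then invoke the Hassibi--Sayed--Kailath DARE-based measurement-feedback $H_\infty$ theorem, which is exactly the paper's argument (the paper makes the controller correspondence precise via the Youla scaling $\widehat Q = \gamma^{-1}Q$, $K = \gamma\widehat K$, a detail you state only informally as a bijection). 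No substantive gap; the remaining work you identify (state-space bookkeeping for $\Delta_2^{-1}$ and the augmented realization) is precisely what the paper carries out.
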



We note that the regret-optimal controller can be easily obtained via bisection on $\gamma$.

\begin{proof}
The regret condition (\ref{energy-w-energy-v-regret-cond}) can be rewritten as 
\begin{equation} \label{nergy-w-energy-v-regret-cond-2}
 T_K^* T_K  \preceq \left(\gamma^2\begin{bmatrix} I_p & 0 \\ 0 & I_r \end{bmatrix} + T_{K_0}^* T_{K_0}\right).
\end{equation}
Let $\Delta_2$ be the unique causal and causally invertible operator such that \begin{equation} \label{energy-w-energy-v-factorization}
\gamma^2 I_p + G^*\left(I + FF^*\right)^{-1}G = \Delta_2^* \Delta_2.
\end{equation} 
Then $$\gamma^2\begin{bmatrix} I_p & 0 \\ 0 & I_r \end{bmatrix} + T_{K_0}^* T_{K_0}  = \begin{bmatrix} \Delta_2 & 0 \\ 0 & \gamma I \end{bmatrix}^* \begin{bmatrix} \Delta_2 & 0 \\ 0 & \gamma I \end{bmatrix}. $$
Define $$ \begin{bmatrix} \widehat{w} \\ \widehat{v} \end{bmatrix} = \begin{bmatrix} \Delta_2 & 0 \\ 0 & \gamma I \end{bmatrix} \begin{bmatrix} w \\ v \end{bmatrix}. $$
Condition (\ref{nergy-w-energy-v-regret-cond-2}) can be rewritten as $$\begin{bmatrix} \widehat{w} \\ \widehat{v} \end{bmatrix}^* T_{\widehat{K}}^*T_{\widehat{K}} \begin{bmatrix} \widehat{w} \\ \widehat{v} \end{bmatrix} < \left\|\begin{bmatrix} \widehat{w} \\ \widehat{v} \end{bmatrix}\right\|_2^2,$$ or equivalently as $$\|T_{\widehat{K}}\| < 1,$$ where we define
 $$T_{\widehat{K}} = T_K\begin{bmatrix} \Delta_2 & 0 \\ 0 & \gamma I \end{bmatrix}^{-1}.$$ Using the parameterization (\ref{transfer-operator}) of $T_K$, we see that 
$$ T_{\widehat{K}} = \begin{bmatrix} G\Delta_2^{-1} & 0 \\ 0 & 0 \end{bmatrix}  + \begin{bmatrix}F \\ I \end{bmatrix} (\gamma^{-1} Q) \begin{bmatrix} \gamma J\Delta_2^{-1} & I \end{bmatrix}.
$$
Notice that $T_{\widehat{K}}$ itself has the form of a transfer operator described in (\ref{transfer-operator}); it is the transfer operator with Youla parameter $\widehat{Q} = \gamma^{-1} Q$ in the system 
\begin{equation} \label{energy-w-energy-v-regret-system}
s = \widehat{F}u + \widehat{G}\widehat{w}, \hspace{3mm} \widehat{y} = \widehat{H}u + \widehat{J}\widehat{w} + \widehat{v},
\end{equation}
where we define $\widehat{F} = F, \widehat{G} =  G\Delta_2^{-1}, \widehat{H} = \gamma H, \widehat{J} = \gamma J \Delta_2^{-1}$. It is now clear that a controller $K$  satisfying (\ref{energy-w-energy-v-regret-cond}) exists if and only if there exists a controller $\widehat{K}$ in the system (\ref{energy-w-energy-v-regret-system}) such that $\|T_{\widehat{K}} \| < 1 $. If such a controller $\widehat{K}$ exists, then we can easily recover $K$ from $\widehat{K}$ by setting $K = \gamma \widehat{K}$; notice that this is the unique choice of $K$ which is consistent with the relations $\widehat{H} = \gamma H, \widehat{Q} = \gamma^{-1} Q $.

In order to assign state-space structure to $\widehat{F}, \widehat{G}, \widehat{H}, \widehat{J}$, we must first find $\Delta_2(z)$. Let $\Delta$ be the unique causal and causally invertible operator such that $$I + FF^* = \Delta\Delta^*.$$ Lemma \ref{canonical-factorization-lemma} shows that $$\Delta(z) = (I + L(zI - A)^{-1}K )\Sigma^{1/2}  .$$ It follows that $\Delta^{-1}(z)G(z)$ is given by $$\Delta^{-1}(z)G(z) = \Sigma^{-1/2} L(zI - (A - KL))^{-1}B_w.$$
Define $\widetilde{A} = A - KL $.
Notice that $ \gamma^2 I_p +  G^*\Delta^* \Delta^{-1}G$ can be rewritten as
$$ \begin{bmatrix} B_w^*(z^{-*}I_n - \widetilde{A})^{-*} & I_p  \end{bmatrix} \begin{bmatrix} L^*\Sigma^{-1}L & 0 \\ 0 & \gamma^2 I_p \end{bmatrix} \begin{bmatrix} (zI_n - \widetilde{A})^{-1}B_w \\ I_p \end{bmatrix}.$$
Applying Lemma \ref{equivalence-class-lemma-1}, we see that this equals $$\begin{bmatrix} B_w^*(z^{-*}I_n - \widetilde{A})^{-*} & I \end{bmatrix} \Lambda_2(P_2) \begin{bmatrix} (zI_n - \widetilde{A})^{-1}B_w \\ I \end{bmatrix},$$
where $P_2$ is an arbitrary Hermitian matrix and we define $$\Lambda_2(P_2) = \begin{bmatrix} L^*\Sigma^{-1}L - P_2 +  \widetilde{A}^*P_2\widetilde{A} &  \widetilde{A}^*P_2B_w \\  B_w^*P_2\widetilde{A} & \gamma^2I + B_w^*P_2B_w \end{bmatrix}.$$ Notice that the $\Lambda_2(P_2)$ can be factored as $$\begin{bmatrix} I & K_2^*(P_2) \\ 0 & I \end{bmatrix} \begin{bmatrix} \Gamma_2(P_2) & 0 \\ 0 & \Sigma_2(P_2) \end{bmatrix} \begin{bmatrix} I & 0 \\ K_2(P_2) & I \end{bmatrix}, $$
where we define $$\Gamma_2(P_2) =  L^*\Sigma^{-1}L - P_2 +  \widetilde{A}^*P_2\widetilde{A} - K_2^*(P_2)\Sigma_2K_2(P_2),$$ 
$$K_2(P_2) = \Sigma_2^{-1}(P_2) B_w^*P_2\widetilde{A}, \hspace{3mm} \Sigma_2(P_2) = \gamma^2I + B_w^*P_2B_w.$$
It is clear that $(\widetilde{A}, B_w)$ is stabilizable (in fact, $\widetilde{A}$ is stable), therefore the Riccati equation $\Gamma_2(P_2) = 0$ has a unique stabilizing solution (Theorem E.6.2 in \cite{kailath2000linear}).  Suppose $P_2$ is chosen to be this solution, and define $K_2 = K_2(P_2)$, $\Sigma_2 = \Sigma_2(P_2)$. We immediately obtain the factorization (\ref{energy-w-energy-v-factorization}), where we define 
\begin{equation} \label{energy-u-energy-v-delta}
\Delta_2(z) = \Sigma_2^{1/2}(I + K_2(zI - \widetilde{A})^{-1}B_w).
\end{equation}
We have $$\Delta_2^{-1}(z) = (I - K_2(zI - (\widetilde{A} - B_wK_2))^{-1}B_w) \Sigma_2^{-1/2}. $$ We note that $ \widetilde{A} - B_wK_2$ is stable and hence $\Delta^{-1}(z)$ is causal and bounded since its poles are strictly contained in the unit circle. Define 
\begin{equation} \label{energy-w-energy-v-measurement-feedback-new-dynamics}
\begin{cases}
\widehat{A} =  \begin{bmatrix} A & -B_wK_2  \\ 0 & \widetilde{A} - B_wK_2 \end{bmatrix},\\
\\
\widehat{B}_w = \begin{bmatrix} B_w \Sigma_2^{-1/2} \\ B_w \Sigma_2^{-1/2} \end{bmatrix}, \\ 
\\
\widehat{B}_u = \begin{bmatrix} B_u \\ 0 \end{bmatrix},\\
\\
\widehat{C} =  \begin{bmatrix} \gamma C & 0 \end{bmatrix}, \\
\\
\widehat{L} =  \begin{bmatrix} L & 0 \end{bmatrix}.
\end{cases}
\end{equation}
Recall that $\widehat{F} = F, \widehat{G} =  G\Delta_2^{-1}, \widehat{H} = \gamma H, \widehat{J} = \gamma J \Delta_2^{-1}$; it follows that $\widehat{F}, \widehat{G}, \widehat{H}, \widehat{J}$ are given by
$$\widehat{F}(z) =  \widehat{L}(zI - \widehat{A})^{-1}\widehat{B}_u, \hspace{5mm} \widehat{G}(z) = \widehat{L}(zI - \widehat{A})\widehat{B}_w, $$
$$\widehat{H}(z) = \widehat{C}(zI - \widehat{A})^{-1}\widehat{B}_u, \hspace{5mm} \widehat{J}(z) = \widehat{C}(zI - \widehat{A})\widehat{B}_w.$$
\end{proof}


\section{Regret bounded by the pathlength of $w$ and the energy of $v$} \label{pathlength-sec}

In this section we describe a causal controller $\pi$ whose regret is bounded by the pathlength of the driving disturbance and the energy of the measurement disturbance. This presents a sharp contrast with the negative results of the previous sections; the key difference is that the only pairs of disturbances $(w, v)$ such that the pathlength of the $w$ is zero and the energy of $v$ is zero are pairs where $w$ is constant and $v$ is zero. It is easy to guarantee that the causal controller $K$ matches the noncausal  controller $K_0$ on these specific instances, without constraining the behavior of $K$ on all other instances. We derive the measurement-feedback controller whose regret has optimal dependence on the pathlength of the driving disturbance and the energy of the measurement disturbance via a reduction to $H_{\infty}$ measurement-feedback control.

\begin{theorem} \label{pathlength-w-energy-v-measurement-feedback-thm}
Fix $\gamma > 0$ and define $\widehat{A}, \widehat{B}_u, \widehat{B}_w, \widehat{C}, \widehat{L}$ as in 
(\ref{pathlength-w-energy-v-measurement-feedback-new-dynamics}). There exists a causal measurement-feedback controller $\pi$ such that
\begin{equation} \label{pathlength-w-energy-v-regret-cond}
J(\pi, w, v) - J(\pi_0, w)  < \gamma^2(\|D_p w\|_2^2 + \| v\|_2^2)   
\end{equation}
if and only if the control DARE $$P_c = \widehat{A}^*P_c\widehat{A} + \widehat{L}^*\widehat{L} - K_c^* R_c K_c, $$ and the estimation DARE $$P_e = \widehat{A}P_e \widehat{A}^* +    \widehat{B}_w \widehat{B}_w^* - K_e R_e K_e^*,$$ where we define 
$$\begin{cases}
K_c = R_c^{-1}\begin{bmatrix} \widehat{B}_u^* \\ 
\widehat{B}_w^* \end{bmatrix}P_c \widehat{A} \\
\\
R_c = \begin{bmatrix} I_m & 0 \\ 0 & -I_p \end{bmatrix} + \begin{bmatrix} \widehat{B}_u^* \\    \widehat{B}_w^* \end{bmatrix}P_c \begin{bmatrix} \widehat{B}_u &    \widehat{B}_w \end{bmatrix}, \\
\\
K_e = \widehat{A}P^d \begin{bmatrix}  \widehat{C} & L \end{bmatrix}R_e^{-1}, \\
\\
R_e = \begin{bmatrix} I_r & 0 \\ 0 & -I_n \end{bmatrix} + \begin{bmatrix}  \widehat{C} \\ \widehat{L} \end{bmatrix}P_c \begin{bmatrix}  \widehat{C}^* & \widehat{L}^* \end{bmatrix}, 
\end{cases} $$  have solutions $P_c \succeq 0$ and $P_e \succeq 0$ such that 
\begin{enumerate}
    \item The matrix $\widehat{A} - \begin{bmatrix} \widehat{B}_u &    \widehat{B}_w \end{bmatrix}K_c$ is stable.
    \item The matrix $R_c$ has $m$ positive eigenvalues and $p$ negative eigenvalues.
    \item The matrix $\widehat{A} - K_e\begin{bmatrix}  \widehat{C} \\ \widehat{L} \end{bmatrix}$ is stable.
    \item The matrix $R_e$ has $r$ positive eigenvalues and $n$ negative eigenvalues.
    \item $\rho(P_c P_e) < 1$.
\end{enumerate}
If these conditions are satisfied, then one possible choice of $\pi$ is given by $$u_t = - K_u (\xi_t + P \widehat{C}^*(I_r +  \widehat{C} P \widehat{C}^*)^{-1}(y_t - \widehat{C} \xi_t)),$$ where the synthetic state $\xi \in \mathbb{R}^{2n + p}$ evolves according to the linear dynamics equation $$\xi_{t+1} = (\widehat{A} - \widehat{B}_wK_w)(\xi_t + P \widehat{C}^*(I_r +  \widehat{C} P \widehat{C}^*)^{-1}(y_t - \widehat{C} \xi_t)) + \widehat{B}_u u_t.$$ The matrices $K_u \in \mathbb{R}^{m \times (2n+p)}$ and $K_w \in \mathbb{R}^{p \times (2n+p)}$ are defined as $$\begin{bmatrix} K_u \\ K_w \end{bmatrix} = K_c,$$ and we define $P \in \mathbb{R}^{(2n + p)\times (2n + p)}$ as $$ P = P_e(I_n - P_c P_e)^{-1}. $$
\end{theorem}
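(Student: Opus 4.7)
The plan is to mirror the reduction used in the proof of Theorem 3, with the spectral factorization modified to reflect the pathlength weighting on the driving disturbance. Starting from (\ref{pathlength-w-energy-v-regret-cond}), I would first rewrite the regret condition as the operator inequality
\begin{equation*}
T_K^*T_K \preceq \gamma^2 \begin{bmatrix} D_p^*D_p & 0 \\ 0 & I_r \end{bmatrix} + T_{K_0}^*T_{K_0}.
\end{equation*}
Using $T_{K_0}^*T_{K_0} = \begin{bmatrix} G^*(I+FF^*)^{-1}G & 0 \\ 0 & 0 \end{bmatrix}$, the right-hand side takes the block-diagonal form $\begin{bmatrix} \Delta_2^*\Delta_2 & 0 \\ 0 & \gamma^2 I\end{bmatrix}$, provided one can produce a causal and causally invertible $\Delta_2$ satisfying the spectral factorization
\begin{equation*}
\gamma^2\, D_p^*D_p + G^*(I + FF^*)^{-1}G \;=\; \Delta_2^*\Delta_2.
\end{equation*}
Defining $\widehat{w} = \Delta_2 w$, $\widehat{v} = \gamma v$ and $T_{\widehat{K}} = T_K \begin{bmatrix} \Delta_2 & 0 \\ 0 & \gamma I\end{bmatrix}^{-1}$, the regret bound becomes exactly $\|T_{\widehat{K}}\|<1$, i.e.\ an $H_\infty$ problem in a synthetic system $(\widehat{F},\widehat{G},\widehat{H},\widehat{J}) = (F,\, G\Delta_2^{-1},\, \gamma H,\, \gamma J\Delta_2^{-1})$ with Youla parameter $\widehat{Q} = \gamma^{-1}Q$; recovering $K=\gamma\widehat{K}$ from any such $\widehat{K}$ is then immediate.

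The second step is to put $(\widehat{F},\widehat{G},\widehat{H},\widehat{J})$ into state-space form. The factor $(I+FF^*) = \Delta\Delta^*$ and the state-space realization of $\Delta^{-1}G$ come directly from Lemma \ref{canonical-factorization-lemma}, so writing $\gamma^2 D_p^*D_p + G^*(I+FF^*)^{-1}G$ reduces to the finite-dimensional inner expression $\gamma^2 D_p^*D_p + G^*\Delta^{-*}\Delta^{-1}G$ on the unit circle. The difference from the energy/energy case is the realization of the pathlength operator $D_p(z) = (1-z^{-1})I_p$, which requires augmenting the state with an additional $p$ coordinates that track $w_{t-1}$. Applying the same equivalence-class / Riccati completion procedure used for $\Gamma_2(P_2)=0$ in the proof of Theorem 3 (now applied to an enlarged generator of dimension $n+p$) would then produce $\Delta_2(z)$ as a causal, causally invertible transfer function together with the synthetic state-space matrices $(\widehat{A},\widehat{B}_u,\widehat{B}_w,\widehat{C},\widehat{L})$ in (\ref{pathlength-w-energy-v-measurement-feedback-new-dynamics}). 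The state dimension $2n+p$ announced in the theorem would then decompose as $n$ (original plant) $+\,n$ (canonical factorization of $I+FF^*$) $+\,p$ (realization of $D_p$).

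The final step is to invoke the standard discrete-time $H_\infty$ measurement-feedback theorem of \cite{hassibi1999indefinite} applied to the synthetic system. The existence of $\widehat{K}$ with $\|T_{\widehat{K}}\|<1$ is equivalent to the coupled control and estimation DAREs admitting solutions $P_c,\,P_e\succeq 0$ with the stability, inertia, and spectral-radius conditions (1)--(5); the central controller formula then translates, via $K=\gamma\widehat{K}$ and the Youla relation $K = (I + \widehat{Q}\widehat{H})^{-1}\widehat{Q}$, into the observer-plus-state-feedback expression stated for $u_t$ and the dynamics of $\xi_t$.

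The main obstacle will be the spectral factorization of $\gamma^2 D_p^*D_p + G^*(I+FF^*)^{-1}G$. Because $D_p$ has a zero at $z=1$, the summand $D_p^*D_p$ is only positive semidefinite on $\ell_2$, and one must verify that the sum remains bounded below away from zero on the unit circle (equivalently, that $G^*(I+FF^*)^{-1}G$ is strictly positive at $z=1$) in order to obtain a $\Delta_2$ whose inverse is causal and bounded; this is where the Riccati stabilizability conditions and the choice of the additional $p$ augmented states do the real work. Once this factorization and its state-space realization are in hand, the remainder of the proof is a direct repetition of the reduction in Theorem 3 with the enlarged synthetic system substituted in.
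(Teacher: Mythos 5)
Your proposal follows essentially the same route as the paper: rewrite the regret bound as an operator inequality, spectrally factor $\gamma^2 D_p^*D_p + G^*(I+FF^*)^{-1}G = \Delta_2^*\Delta_2$, reduce to $\|T_{\widehat{K}}\|<1$ for the synthetic system $(\,F,\ G\Delta_2^{-1},\ \gamma H,\ \gamma J\Delta_2^{-1}\,)$ with $\widehat{Q}=\gamma^{-1}Q$ and $K=\gamma\widehat{K}$, realize $\Delta_2$ via the canonical factorization of $I+FF^*$ together with a $p$-dimensional state augmentation for $D_p$ and a Riccati completion on an $(n+p)$-dimensional generator, and then invoke the $H_\infty$ measurement-feedback result of \cite{hassibi1999indefinite}. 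This matches the paper's proof, including the $2n+p$ state-dimension bookkeeping, and your worry about the zero of $D_p$ at $z=1$ is resolved exactly as you anticipate, through the existence of the stabilizing solution to $\Gamma_2(P_2)=0$ guaranteed by the stabilizability of $(\widetilde{A},\widetilde{B}_w)$.
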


We note that the regret-optimal controller can be easily obtained via bisection on $\gamma$.

\begin{proof}
The regret condition (\ref{pathlength-w-energy-v-measurement-feedback-thm}) can be rewritten in matrix form as 
\begin{equation} \label{pathlength-w-energy-v-measurement-feedback-cond-2}
\begin{bmatrix} w \\ v \end{bmatrix}^* T_K^* T_K \begin{bmatrix} w \\ v \end{bmatrix} < \begin{bmatrix} w \\ v \end{bmatrix}^* \left(\gamma^2\begin{bmatrix} D_p^*D_p & 0 \\ 0 & I \end{bmatrix} + T_{K_0}^* T_{K_0}\right)\begin{bmatrix} w \\ v \end{bmatrix}.
\end{equation}
Let $\Delta_2$ be the unique causal and causally invertible operator such that \begin{equation} \label{pathlength-control-factorization}
\gamma^2 D_p^*D_p + G^*\left(I + FF^*\right)^{-1}G = \Delta_2^* \Delta_2.
\end{equation} 
Then $$\gamma^2\begin{bmatrix} D_p^*D_p & 0 \\ 0 & I \end{bmatrix} + T_{K_0}^* T_{K_0}  = \begin{bmatrix} \Delta_2 & 0 \\ 0 & \gamma I \end{bmatrix}^* \begin{bmatrix} \Delta_2 & 0 \\ 0 & \gamma I \end{bmatrix}. $$
Define $$ \begin{bmatrix} \widehat{w} \\ \widehat{v} \end{bmatrix} = \begin{bmatrix} \Delta_2 & 0 \\ 0 & \gamma I \end{bmatrix} \begin{bmatrix} w \\ v \end{bmatrix}.$$
Condition (\ref{pathlength-w-energy-v-measurement-feedback-cond-2}) can be rewritten as $$\begin{bmatrix} \widehat{w} \\ \widehat{v} \end{bmatrix}^* T_{\widehat{K}}^*T_{\widehat{K}} \begin{bmatrix} \widehat{w} \\ \widehat{v} \end{bmatrix} < \left\|\begin{bmatrix} \widehat{w} \\ \widehat{v} \end{bmatrix}\right\|_2^2,$$ or equivalently as $$\|T_{\widehat{K}}\| < 1,$$ where we define
 $$T_{\widehat{K}} = T_K\begin{bmatrix} \Delta_2 & 0 \\ 0 & \gamma I \end{bmatrix}^{-1}.$$ Using the parameterization (\ref{transfer-operator}) of $T_K$, we see that 
$$ T_{\widehat{K}} = \begin{bmatrix} G\Delta_2^{-1} & 0 \\ 0 & 0 \end{bmatrix}  + \begin{bmatrix}F \\ I \end{bmatrix} (\gamma^{-1} Q) \begin{bmatrix} \gamma J\Delta_2^{-1} & I \end{bmatrix}.
$$
Notice that $T_{\widehat{K}}$ itself has the form of a transfer operator described in (\ref{transfer-operator}); it is the transfer operator with Youla parameter $\widehat{Q} = \gamma^{-1} Q$ in the system 
\begin{equation} \label{pathlength-w-energy-v-measurement-feedback-system}
s = \widehat{F}u + \widehat{G}\widehat{w}, \hspace{3mm} \widehat{y} = \widehat{H}\widehat{u} + \widehat{J}\widehat{w} + \widehat{v},
\end{equation}
where we define $\widehat{F} = F, \widehat{G} =  G\Delta_2^{-1}, \widehat{H} = \gamma H, \widehat{J} = \gamma J \Delta_2^{-1}$. It is now clear that a controller $K$  satisfying (\ref{pathlength-w-energy-v-measurement-feedback-cond-2}) exists if and only if there exists a controller $\widehat{K}$ in the system (\ref{pathlength-w-energy-v-measurement-feedback-system}) such that $\|T_{\widehat{K}} \| < 1 $. If such a controller $\widehat{K}$ exists, then we can easily recover $K$ from $\widehat{K}$ by setting $K = \gamma \widehat{K}$; notice that this is the unique choice of $K$ which is consistent with the relations $\widehat{H} = \gamma H, \widehat{Q} = \gamma^{-1} Q $.

In order to assign state-space structure to $\widehat{F}, \widehat{G}, \widehat{H}, \widehat{J}$, we must first find $\Delta_2(z)$. Let $\Delta$ be the unique causal and causally invertible operator such that $$I + FF^* = \Delta\Delta^*.$$ Lemma \ref{canonical-factorization-lemma} shows that $$\Delta(z) = (I + L(zI - A)^{-1}K )\Sigma^{1/2}  .$$ It follows that $\Delta^{-1}(z)G(z)$ is given by $$\Delta^{-1}(z)G(z) = \Sigma^{-1/2} L(zI - (A - KL))^{-1}B_w.$$
Define 
\begin{equation*} 
 \widetilde{L} = \begin{bmatrix} \Sigma^{-1/2}L & 0 \\ 0 & \gamma I_p \end{bmatrix}, \hspace{3mm} \widetilde{S} = \begin{bmatrix} 0 \\ \gamma^2 I_p \end{bmatrix}, 
\end{equation*}
\begin{equation} \label{tilde-A-tilde-B-def}
 \widetilde{A} = \begin{bmatrix} A - KL & 0 \\ 0 & 0 \end{bmatrix}, \hspace{3mm} \widetilde{B}_w = \begin{bmatrix} B_w \\  -I_p \end{bmatrix}.
\end{equation}
Notice that we can rewrite
$$ \begin{bmatrix} \gamma^2 D_p^*(z^{-*})D_p(z) +  G^*\Delta^{-*} \Delta^{-1}G & 0 \\ 0 & \gamma^2I \end{bmatrix}$$ 
as
$$ \begin{bmatrix} \widetilde{B}_w^*(z^{-*}I - \widetilde{A})^{-*} & I  \end{bmatrix} \begin{bmatrix} \widetilde{L}^*\widetilde{L} & \widetilde{S} \\ \widetilde{S}^* & \gamma^2 I \end{bmatrix} \begin{bmatrix} (zI - \widetilde{A})^{-1}\widetilde{B}_w \\ I \end{bmatrix}.$$
Applying Lemma \ref{equivalence-class-lemma-1}, we see that this equals $$\begin{bmatrix} \widetilde{B}_w^*(z^{-*}I - \widetilde{A})^{-*} & I \end{bmatrix} \Lambda_2(P_2) \begin{bmatrix} (zI - \widetilde{A})^{-1}\widetilde{B}_w \\ I \end{bmatrix},$$
where $P_2$ is an arbitrary Hermitian matrix and we define $$\Lambda_2(P_2) = \begin{bmatrix} \widetilde{L}^*\widetilde{L} - P_2 +  \widetilde{A}^*P_2\widetilde{A} & \widetilde{S} + \widetilde{A}^*P_2\widetilde{B}_w \\ \widetilde{S}^* + \widetilde{B}_w^*P_2\widetilde{A} & \gamma^2I + \widetilde{B}_w^*P_2\widetilde{B}_w \end{bmatrix}.$$ Notice that the $\Lambda_2(P_2)$ can be factored as $$\begin{bmatrix} I & K_2^*(P_2) \\ 0 & I \end{bmatrix} \begin{bmatrix} \Gamma_2(P_2) & 0 \\ 0 & \Sigma_2(P_2) \end{bmatrix} \begin{bmatrix} I & 0 \\ K_2(P_2) & I \end{bmatrix}, $$
where we define $$\Gamma_2(P_2) = \widetilde{L}^*\widetilde{L} - P_2 +  \widetilde{A}^*P_2\widetilde{A} - K_2^*(P_2)\Sigma_2K_2(P_2),$$ 
\begin{align*} 
K_2(P_2) &= \Sigma_2^{-1}(P_2)(\widetilde{S}^* + \widetilde{B}_w^*P_2\widetilde{A}) \\
\Sigma_2(P_2) &= \gamma^2I + \widetilde{B}_w^*P_2\widetilde{B}_w.
\end{align*}
It is clear that $(\widetilde{A}, \widetilde{B}_w)$ is stabilizable (this follows from the stability of $A - KL$), therefore the Riccati equation $\Gamma_2(P_2) = 0$ has a unique stabilizing solution (Theorem E.6.2 in \cite{kailath2000linear}).  Suppose $P_2$ is chosen to be this solution, and define $K_2 = K_2(P_2)$, $\Sigma_2 = \Sigma_2(P_2)$. We immediately obtain the factorization (\ref{pathlength-control-factorization}), where we define 
\begin{equation} \label{energy-u-pathlength-v-delta}
\Delta_2(z) = \Sigma_2^{1/2}(I + K_2(zI - \widetilde{A})^{-1}\widetilde{B}_w).
\end{equation}
We have $$\Delta_2^{-1}(z) = (I - K_2(zI - (\widetilde{A} - \widetilde{B}_wK_2))^{-1}\widetilde{B}_w) \Sigma_2^{-1/2}. $$ We note that $ \widetilde{A} - \widetilde{B}_wK_2$ is stable and hence $\Delta_2^{-1}(z)$ is causal and bounded since its poles are strictly contained in the unit circle. Define 
\begin{equation} \label{pathlength-w-energy-v-measurement-feedback-new-dynamics}
\begin{cases}
\widehat{A} =  \begin{bmatrix} A & -B_wK_2  \\ 0 & \widetilde{A} - \widetilde{B}_wK_2  \end{bmatrix} \\ 
\\
\widehat{B}_u = \begin{bmatrix} B_u \\ 0 \end{bmatrix}, \\
\\
\widehat{B}_w = \begin{bmatrix} B_w \Sigma_2^{-1/2} \\ \widetilde{B}_w \Sigma_2^{-1/2} \end{bmatrix}, \\
\\
\widehat{C} =  \begin{bmatrix} \gamma C & 0 \end{bmatrix},\\
\\
\widehat{L} =  \begin{bmatrix} L & 0 \end{bmatrix}.
\end{cases}
\end{equation}
Recall that $\widehat{F} = F, \widehat{G} =  G\Delta_2^{-1}, \widehat{H} = \gamma H, \widehat{J} = \gamma J \Delta_2^{-1}$; it follows that $\widehat{F}, \widehat{G}, \widehat{H}, \widehat{J}$ are given by
$$ \widehat{F}(z) =  \widehat{L}(zI - \widehat{A})^{-1}\widehat{B}_u, \hspace{5mm} \widehat{G}(z) = \widehat{L}(zI - \widehat{A})\widehat{B}_w, $$
$$\widehat{H}(z) = \widehat{C}(zI - \widehat{A})^{-1}\widehat{B}_u,  \hspace{5mm} \widehat{J}(z) = \widehat{C}(zI - \widehat{A})\widehat{B}_w.$$
\end{proof}

\section{Numerical Experiments}
We evaluate the performance of our regret-optimal control algorithms in a double integrator system. The double integrator is a simple dynamical system that models the one-dimensional kinematics of a moving object with position $x$ and velocity $\dot{x}$; the goal of the controller is to stabilize the object by keeping $(x, \dot{x})$ as close to $(0, 0)$ as possible. The discrete-time dynamics are $$ \begin{bmatrix} x_{t+1} \\ \dot{x}_{t+1} \end{bmatrix} = \begin{bmatrix} 1 & \delta_t \\ 0 & 1\end{bmatrix} \begin{bmatrix} x_t \\ \dot{x}_t\end{bmatrix} + \begin{bmatrix} 0 \\ \delta_t \end{bmatrix}u_t + \begin{bmatrix} 0 \\ \delta_t \end{bmatrix} w_t, $$ where $\delta_t$ is the discretization parameter; in our experiments we set $\delta_t = 0.1$ seconds. In our experiments we take $Q = I_2$ and initialize $x$ and $\dot{x}$ to zero. We study the relative performance of the $H_2$-optimal, $H_{\infty}$-optimal, energy-optimal, and pathlength-optimal controllers across various kinds of driving disturbances; in all of our experiments we take the measurement disturbance $v$ to be sampled i.i.d. from $\mathcal{N}(0, 1)$ in each timestep.

We first consider an optimistic setting where both the driving disturbance $w$ and the measurement disturbance $v$ are drawn i.i.d. from $\mathcal{N}(0, 1)$ in each timestep (Figure \ref{double_integrator_measurement_feedback_w_gaussian_fig}). As expected, the $H_2$ controller performs the best out of all causal controllers, with the energy-optimal controller in second place. The poor performance of the pathlength-optimal controller is explained by the fact that a Gaussian driving disturbance has high pathlength relative to its energy. 

We next consider an adversarial setting where the driving disturbance is an impulse, i.e. the disturbance is zero across the whole time horizon except for a spike near $t = 500$ (Figure \ref{double_integrator_measurement_feedback_w_dirac_fig}). As expected, the $H_{\infty}$ controller performs the best out of all causal controllers, with the energy-optimal controller close behind. The overly optimistic $H_2$ controller performs poorly on this disturbance.

Lastly, we consider a setting where the driving disturbance is a Gaussian random walk, i.e. $w_t$ is the sum of the first $t$ of a set of random variables, each of which is drawn i.i.d from $\mathcal{N}(0, 1)$. Notice that the energy of this disturbance is much higher than its pathlength. As expected, the pathlength-optimal controller now outperforms all other causal controllers and closely tracks the clairvoyant noncausal controller.

\begin{figure}[h!]
\centering
\includegraphics[width=0.5\textwidth]{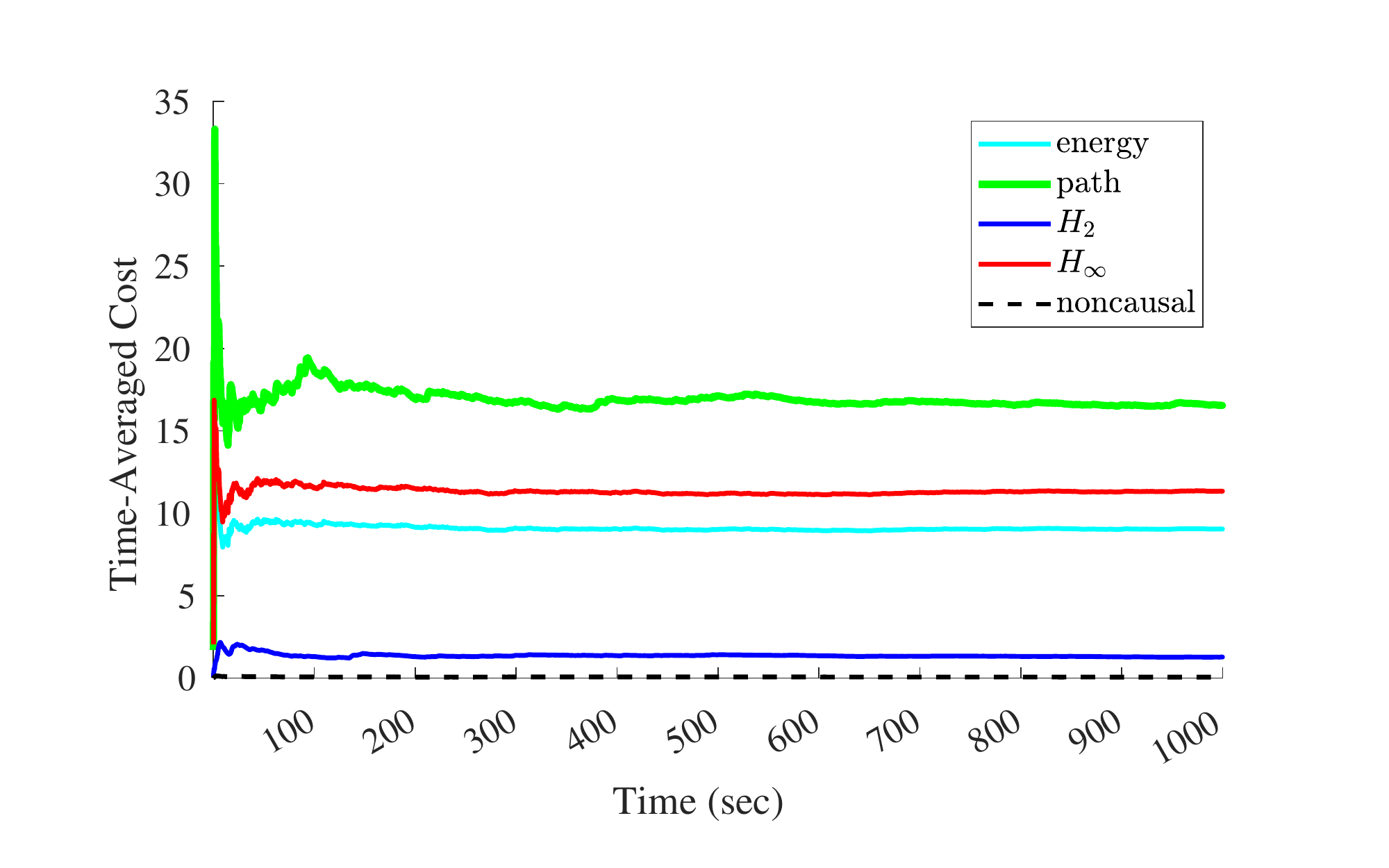}
\caption{Relative performance of measurement-feedback controllers in the double integrator system when both the driving disturbance and the measurement disturbance are drawn i.i.d from $\mathcal{N}(0, 1)$.}
\label{double_integrator_measurement_feedback_w_gaussian_fig}
\end{figure}

\begin{figure}[h!]
\centering
\includegraphics[width=0.5\textwidth]{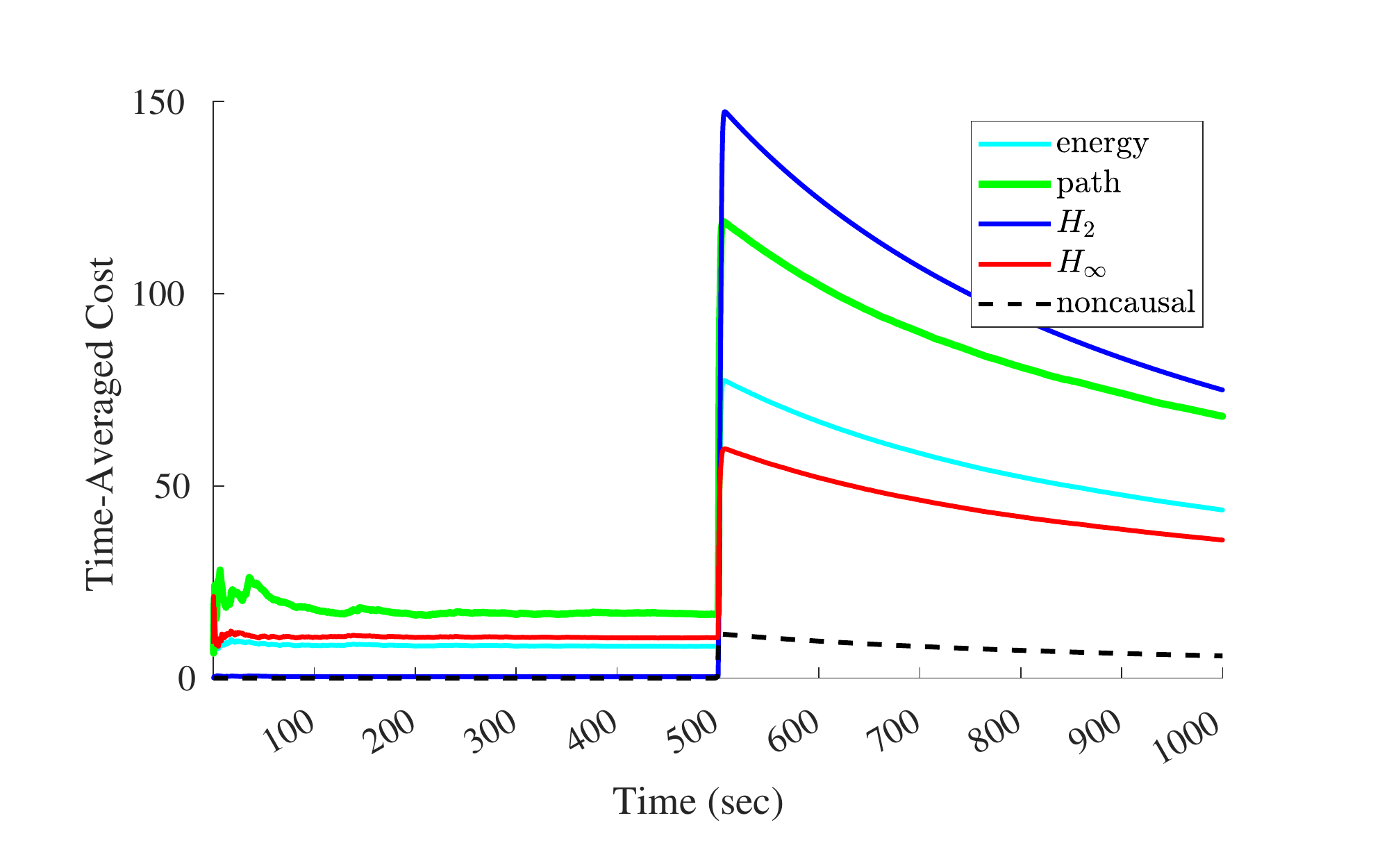}
\caption{Relative performance of measurement-feedback controllers in the double integrator system when the driving disturbance is an impulse which spikes near $t = 500$.}
\label{double_integrator_measurement_feedback_w_dirac_fig}
\end{figure}

\begin{figure}[h!]
\centering
\includegraphics[width=0.5\textwidth]{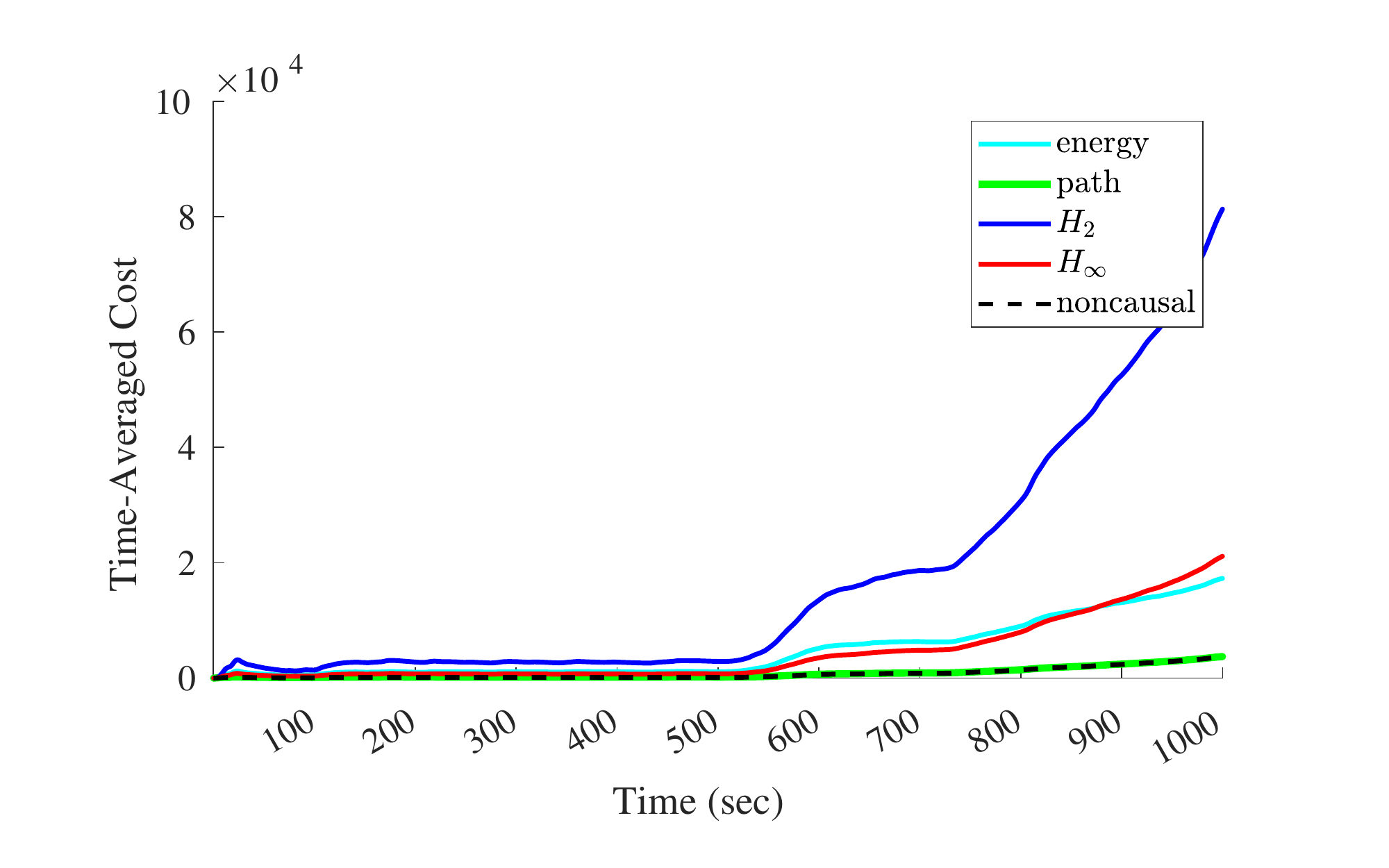}
\caption{Relative performance of measurement-feedback controllers in the double integrator system when the driving disturbance is a Gaussian random walk.}
\label{double_integrator_measurement_feedback_w_gaussian_walk_fig}
\end{figure}

\printbibliography

\section{Appendix}

\subsection{Missing Lemmas}
We present proofs of some of the key lemmas used in the main body.

\begin{lemma} \label{canonical-factorization-lemma}
The following canonical factorization holds: $$ I + F(z) F(z^{-*})^* = \Delta(z) \Delta^*(z^{-*}), $$ where we define 
\begin{equation} \label{delta-definition}
\Delta(z) = (I + L(zI - A)^{-1}K )\Sigma^{1/2},
\end{equation}
$$
K = APL\Sigma^{-1}, \hspace{5mm} \Sigma = I + LPL,$$
and $P$ is the unique Hermitian solution to the Riccati equation
$$P = B_u B_u^* + APA^* - APL (I + LPL)^{-1} LPA^*.$$
\end{lemma}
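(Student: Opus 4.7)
The plan is to verify the factorization directly, by expanding both $I + F(z)F^*(z^{-*})$ and $\Delta(z)\Delta^*(z^{-*})$ in the state-space realization $F(z) = L(zI - A)^{-1}B_u$ and using the Riccati equation to force the two expressions to coincide. Conceptually this is the Kalman innovations representation for the fictitious system $x_{t+1} = Ax_t + B_u u_t$, $y_t = Lx_t + \eta_t$ driven by independent unit white noises, with $\Delta$ the innovations filter and $\Sigma$ the innovations covariance; but a self-contained algebraic verification is cleanest and avoids any appeal to stochastic machinery.

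First I would rearrange the DARE into the form $B_u B_u^* = P - APA^* + K\Sigma K^*$ and record the collateral identities $K\Sigma = APL$, $\Sigma K^* = LPA^*$, and $\Sigma - I = LPL$ (using $L = L^*$ since $L = Q^{1/2}$). With the shorthand $N = (zI - A)^{-1}$ and $\bar{N} = (z^{-1}I - A^*)^{-1}$, I would expand $\Delta(z)\Delta^*(z^{-*})$ and $I + F(z)F^*(z^{-*})$ term by term. After substituting the Riccati identity for $B_u B_u^*$, the $LNK\Sigma K^*\bar{N}L$ contributions cancel between the two sides and the remaining difference reduces to
$$L\Big[\,P + NAP + PA^*\bar{N} + NAPA^*\bar{N} - NP\bar{N}\,\Big]L.$$

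To show this bracketed expression vanishes, I would invoke the resolvent identities $I + NA = zN$ and $I + A^*\bar{N} = z^{-1}\bar{N}$, which follow immediately from $N^{-1} = zI - A$ and $\bar{N}^{-1} = z^{-1}I - A^*$. They let me factor $P + NAP + PA^*\bar{N} + NAPA^*\bar{N} = (I + NA)\,P\,(I + A^*\bar{N}) = (zN)\,P\,(z^{-1}\bar{N}) = NP\bar{N}$, exactly canceling the $-NP\bar{N}$ term. The essential role of the Riccati equation is that it fixes $K$ and $\Sigma$ precisely so that this cancellation goes through; without it, the remainder would pick up extra terms that could not be eliminated.

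Finally, canonicity of $\Delta$ requires both $\Delta$ and $\Delta^{-1}$ to be causal and bounded. Causality of $\Delta$ is immediate from its state-space realization. For $\Delta^{-1}$, the matrix inversion lemma gives $\Delta^{-1}(z) = \Sigma^{-1/2}(I - L(zI - (A - KL))^{-1}K)$, which is causal and bounded exactly when the closed-loop matrix $A - KL$ is stable; this is guaranteed when $P$ is chosen as the stabilizing solution of the DARE, whose existence and uniqueness under standard stabilizability of $(A, B_u)$ and detectability of $(A, L)$ follow from the classical theory (e.g., Theorem E.6.2 in \cite{kailath2000linear}). The main obstacle is purely bookkeeping: keeping track of every term in the expansions and invoking the Riccati identity at just the right moment. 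The potential subtlety in the $z^{-*}$ notation dissolves once one observes that the conjugate-transpose outside the resolvent in $F(z^{-*})^*$ produces a factor $z^{-1}$, so that the scalar product $z \cdot z^{-1} = 1$ in the resolvent step cancels identically as a rational function.
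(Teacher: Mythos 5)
Your proof is correct and follows essentially the same route as the paper: the bracketed expression you cancel via the resolvent identities $I + NA = zN$ and $I + A^*\bar{N} = z^{-1}\bar{N}$ is exactly the content of the paper's Lemma \ref{equivalence-class-lemma-2}, and both arguments then hinge on choosing $P$ as the stabilizing DARE solution (with stability of $A-KL$ giving causal invertibility of $\Delta$). The only difference is organizational—you verify the stated factorization directly, while the paper derives it by inserting $\Omega(P)$ for free and block-LDU-factoring $\Lambda(P)$—so no further comparison is needed.
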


\begin{proof}
We expand $ I + F(z) F(z^{-*})^*$ as
$$  \begin{bmatrix} L(zI - A)^{-1} & I \end{bmatrix} \begin{bmatrix} B_uB_u^* & 0 \\ 0 & I \end{bmatrix} \begin{bmatrix} (z^{-*}I - A)^{-*}L \\ I \end{bmatrix}.$$
Applying Lemma \ref{equivalence-class-lemma-2}, we see that this equals
$$ \begin{bmatrix} L(zI - A)^{-1} & I \end{bmatrix} \Lambda(P) \begin{bmatrix} (z^{-*}I - A)^{-*}L \\ I \end{bmatrix},$$
where $P$ is an arbitrary Hermitian matrix and we define $$\Lambda(P) = \begin{bmatrix} B_uB_u^* - P + APA^* & APL \\ LPA^* & I + LPL \end{bmatrix}.$$ Notice that the $\Lambda(P)$ can be factored as $$\begin{bmatrix} I & K(P) \\ 0 & I \end{bmatrix} \begin{bmatrix} \Gamma(P) & 0 \\ 0 & \Sigma(P) \end{bmatrix} \begin{bmatrix} I & 0 \\ K^*(P) & I \end{bmatrix}, $$
where we define $$\Gamma(P) = B_u B_u^* - P + APA^* - K(P)\Sigma(P) K^*(P),$$ 
$$K(P) = APL\Sigma(P)^{-1},$$
$$\Sigma(P) = I + LPL.$$
By assumption, $(A, B_u)$ is stabilizable and $(A, L)$ is detectable, therefore the Riccati equation $\Gamma(P) = 0$ has a unique stabilizing solution (see, e.g. Theorem E.6.2 in \cite{kailath2000linear}).  Suppose $P$ is chosen to be this solution, and define $K = K(P)$, $\Sigma = \Sigma(P)$. We immediately obtain the canonical factorization $$ I + F(z) F(z^{-*})^* = \Delta(z) \Delta^*(z^{-*}), $$ where we define 
\begin{equation} \label{delta-definition}
\Delta(z) = (I + L(zI - A)^{-1}K )\Sigma^{1/2}.
\end{equation}
\end{proof}

\begin{lemma} \label{equivalence-class-lemma-1}
For all $H, F$ and all Hermitian matrices $P$, we have
$$\begin{bmatrix} H^*(z^{-1}I - F^*)^{-1} & I \end{bmatrix} \Omega(P) \begin{bmatrix} (zI - F)^{-1}H \\ I \end{bmatrix} = 0, $$ where we define $$\Omega(P) = \begin{bmatrix} -P + F^*PF & F^*PH \\ H^*PF & H^*PH \end{bmatrix}. $$
\end{lemma}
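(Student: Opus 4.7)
The plan is to reduce the claim to a short algebraic cancellation by introducing shorthand for the resolvents and using two one-line identities that absorb the factor $F$ into the resolvent on either side.

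Concretely, I would first set $\alpha = \alpha(z) = (zI - F)^{-1}H$ and $\beta = \beta(z) = H^*(z^{-1}I - F^*)^{-1}$, so that the expression we must show vanishes is
\begin{equation*}
S \;=\; \beta(-P + F^*PF)\alpha + \beta F^*PH + H^*PF\alpha + H^*PH.
\end{equation*}
From $F = zI - (zI - F)$ I obtain $F\alpha = z\alpha - H$, and from $F^* = z^{-1}I - (z^{-1}I - F^*)$ I obtain $\beta F^* = z^{-1}\beta - H^*$. These two identities are the only nontrivial input; they hold purely algebraically and require no stability or invertibility assumption beyond the existence of the resolvents $(zI-F)^{-1}$ and $(z^{-1}I - F^*)^{-1}$.

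Next I would substitute these identities into each of the three terms involving $F$ or $F^*$: the quadratic term becomes
\begin{equation*}
\beta F^* P F \alpha \;=\; (z^{-1}\beta - H^*)\,P\,(z\alpha - H) \;=\; \beta P\alpha - z^{-1}\beta PH - zH^*P\alpha + H^*PH,
\end{equation*}
while $\beta F^*PH = z^{-1}\beta PH - H^*PH$ and $H^*PF\alpha = zH^*P\alpha - H^*PH$. Plugging these into $S$ together with the linear term $-\beta P \alpha$ and the constant term $H^*PH$, every resulting summand appears paired with its negative: $\beta P\alpha$ with $-\beta P\alpha$, $z^{-1}\beta PH$ with $-z^{-1}\beta PH$, $zH^*P\alpha$ with $-zH^*P\alpha$, and four copies of $\pm H^*PH$ that telescope. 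Hence $S = 0$.

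There is no real obstacle: the Hermiticity of $P$ is not even used, and the identity holds as a formal rational-matrix identity. The only thing to be careful about is the bookkeeping of $z$ versus $z^{-1}$ factors between the two resolvent identities, so I would double-check signs and powers of $z$ when collecting terms, but this is a straightforward four-term cancellation rather than a genuinely hard step.
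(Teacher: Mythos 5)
Your proof is correct and follows essentially the same route as the paper: the paper verifies this identity (via the general Lemma \ref{general-equivalence-class-lemma}) by factoring $\Omega(P)$ as $\begin{bmatrix} F^* \\ H^* \end{bmatrix} P \begin{bmatrix} F & H \end{bmatrix} - \begin{bmatrix} I \\ 0 \end{bmatrix} P \begin{bmatrix} I & 0 \end{bmatrix}$ and then applying exactly the two resolvent-absorption identities you use ($F(zI-F)^{-1}H = z(zI-F)^{-1}H - H$ and its $z^{-1}$ counterpart), after which the factors $z$ and $z^{-1}$ cancel. Your expand-and-cancel bookkeeping is just the unfactored version of that same calculation, and your observation that Hermiticity of $P$ is never needed matches the paper's general lemma, which holds for arbitrary $W$.
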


\begin{proof}
This identity is essentially the ``transpose'' of Lemma \ref{equivalence-class-lemma-2} and is easily verified via direct calculation.
\end{proof}

\begin{lemma}\label{equivalence-class-lemma-2}
For all $H, F$ and all Hermitian matrices $P$, we have
$$\begin{bmatrix} H(zI - F)^{-1} & I \end{bmatrix} \Omega(P) \begin{bmatrix} (z^{-1}I - F^*)^{-1}H^* \\ I \end{bmatrix} = 0, $$ where we define $$\Omega(P) = \begin{bmatrix} -P + FPF^* & FPH^* \\ HPF^* & HPH^* \end{bmatrix}. $$
\end{lemma}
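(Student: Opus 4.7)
The plan is to verify this identity by a direct algebraic calculation, exploiting a natural low-rank structure of $\Omega(P)$. Observe first that
\[
\Omega(P) \;=\; \begin{bmatrix} F \\ H \end{bmatrix} P \begin{bmatrix} F^* & H^* \end{bmatrix} \;-\; \begin{bmatrix} P & 0 \\ 0 & 0 \end{bmatrix},
\]
so that the quadratic form in the lemma splits as a difference of two scalar (matrix) expressions which I will show cancel.

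Next, I would use the two elementary resolvent identities, valid because $F$ commutes with $(zI-F)^{-1}$ and $F^*$ with $(z^{-1}I-F^*)^{-1}$:
\[
(zI-F)^{-1}F \;+\; I \;=\; z(zI-F)^{-1}, \qquad F^*(z^{-1}I-F^*)^{-1} \;+\; I \;=\; z^{-1}(z^{-1}I-F^*)^{-1}.
\]
Writing $M=(zI-F)^{-1}$ and $N=(z^{-1}I-F^*)^{-1}$, these give respectively
\[
\begin{bmatrix} HM & I \end{bmatrix} \begin{bmatrix} F \\ H \end{bmatrix} \;=\; HMF + H \;=\; H(MF+I) \;=\; zHM,
\]
and, by the transposed identity,
\[
\begin{bmatrix} F^* & H^* \end{bmatrix} \begin{bmatrix} NH^* \\ I \end{bmatrix} \;=\; (F^*N+I)H^* \;=\; z^{-1}NH^*.
\]

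Combining these with the splitting of $\Omega(P)$, the left-hand side of the claimed identity becomes
\[
(zHM)\,P\,(z^{-1}NH^*) \;-\; HM\,P\,NH^* \;=\; HMPNH^* \;-\; HMPNH^* \;=\; 0,
\]
as the scalars $z$ and $z^{-1}$ cancel. This completes the proof. There is no substantive obstacle here: the only mildly subtle point is spotting the rank decomposition of $\Omega(P)$ and the fact that the row/column resolvent products collapse to simple scalar multiples of $HM$ and $NH^*$; once this is noticed the cancellation is immediate. Alternatively, one could expand all four entries of the quadratic form, substitute $MF=zM-I$ and $F^*N=z^{-1}N-I$ into the cross term $MFPF^*N$, and check term-by-term cancellation, but the factored presentation above is cleaner.
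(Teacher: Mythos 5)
Your proof is correct and follows essentially the same route as the paper: the paper derives this lemma as a special case of its general Lemma \ref{general-equivalence-class-lemma}, whose proof uses exactly your rank decomposition of $\Omega$ together with the same resolvent identities collapsing the row and column factors to $zH(zI-F)^{-1}$ and $z^{-1}(z^{-1}I-F^*)^{-1}H^*$. You simply carry out that argument directly in the Hermitian case instead of invoking the general statement.
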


\begin{proof}
This identity is a special case of Lemma \ref{general-equivalence-class-lemma}; it also appears as Lemma 12.3.3 in ``Indefinite-Quadratic Estimation and Control'' by Hassibi, Sayed, and Kailath.
\end{proof}

\begin{lemma} \label{general-equivalence-class-lemma}
For all $H_1, H_2, F_1, F_2$ and all matrices $W$, we have
$$\begin{bmatrix} H_1(zI - F_1)^{-1} & I \end{bmatrix} \Omega(W) \begin{bmatrix} (z^{-1}I - F_2^*)^{-1}H_2^* \\ I \end{bmatrix} = 0, $$ where we define $$\Omega(W) = \begin{bmatrix} -W + F_1WF_2^* & F_1WH_2^* \\ H_1WF_2^* & H_1WH_2^* \end{bmatrix}. $$
\end{lemma}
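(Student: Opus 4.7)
The plan is to follow the same two-stage template used in the previous theorem: first reduce the regret bound to an $H_\infty$ norm bound on a Youla-reparameterized transfer operator in a synthetic plant, and then apply the standard state-space $H_\infty$ measurement-feedback synthesis to that synthetic plant. The first step I would take is to rewrite (\ref{pathlength-w-energy-v-regret-cond}) in operator-inequality form,
$$T_K^* T_K \preceq \gamma^2 \begin{bmatrix} D_p^* D_p & 0 \\ 0 & I_r \end{bmatrix} + T_{K_0}^* T_{K_0},$$
and then factor the right-hand side. Since $T_{K_0}^* T_{K_0}$ only involves the $(w,w)$ block, this reduces the factorization to producing a causal, causally-invertible $\Delta_2$ such that $\gamma^2 D_p^* D_p + G^*(I+FF^*)^{-1} G = \Delta_2^* \Delta_2$. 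Once $\Delta_2$ is in hand, the change of variables $(\widehat{w},\widehat{v}) = \mathrm{diag}(\Delta_2,\gamma I)(w,v)$ combined with the Youla reparameterization $\widehat{Q} = \gamma^{-1} Q$ converts the regret bound into $\|T_{\widehat{K}}\| < 1$ in the synthetic plant
$$\widehat{F} = F,\quad \widehat{G} = G\Delta_2^{-1},\quad \widehat{H} = \gamma H,\quad \widehat{J} = \gamma J \Delta_2^{-1},$$
and the original $K$ is recovered from $\widehat{K}$ by $K = \gamma\widehat{K}$. This portion of the argument is verbatim the reduction in the energy-energy theorem.

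The second step is to assign state-space structure to $\Delta_2$. Lemma \ref{canonical-factorization-lemma} already provides $\Delta$ satisfying $I+FF^* = \Delta\Delta^*$ together with a closed form for $\Delta^{-1} G$ driven by the stable matrix $A - KL$. To incorporate the pathlength weight I would mimic the realization (\ref{tilde-A-tilde-B-def}): introduce an auxiliary $p$-dimensional state that carries a delayed copy of $w$, producing $(\widetilde A,\widetilde B_w,\widetilde L,\widetilde S)$ whose $\widetilde S$ block is the cross-term responsible for $D_p^* D_p = (I - z^{-1})^*(I - z^{-1})$ after absorbing the $-I_p$ feedthrough. Applying Lemma \ref{equivalence-class-lemma-1} with an arbitrary Hermitian $P_2$ collapses the quadratic form into a constant middle matrix $\Lambda_2(P_2)$, and the block $LDL^*$-type factorization of $\Lambda_2(P_2)$ reduces the spectral factorization to the DARE $\Gamma_2(P_2)=0$. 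Because $\widetilde A$ is block-triangular with the $(1,1)$ block $A-KL$ stable, $(\widetilde A,\widetilde B_w)$ is stabilizable, so a unique stabilizing $P_2 \succeq 0$ exists; reading off $K_2$ and $\Sigma_2$ gives (\ref{energy-u-pathlength-v-delta}) and, after verifying that $\widetilde A - \widetilde B_w K_2$ is stable so that $\Delta_2^{-1}$ is causal and bounded, produces exactly the matrices $\widehat A,\widehat B_u,\widehat B_w,\widehat C,\widehat L$ of (\ref{pathlength-w-energy-v-measurement-feedback-new-dynamics}). Note that the synthetic state now lives in $\mathbb{R}^{2n+p}$ rather than $\mathbb{R}^{2n}$, which is precisely the extra dimension appearing in the controller formula.

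The last step is to apply the standard state-space characterization of $H_\infty$-optimal measurement-feedback control (for instance Theorem 12.3.5 and Chapter 13 of Hassibi-Sayed-Kailath) to the synthetic plant $(\widehat A,\widehat B_u,\widehat B_w,\widehat C,\widehat L)$. That theorem states that a measurement-feedback controller with $\|T_{\widehat K}\|<1$ exists if and only if the control and estimation indefinite DAREs listed in the statement admit solutions $P_c,P_e \succeq 0$ satisfying the stability conditions on $\widehat A - [\widehat B_u\ \widehat B_w]K_c$ and $\widehat A - K_e[\widehat C;\widehat L]$, the inertia conditions on $R_c$ and $R_e$, and the coupling condition $\rho(P_c P_e) < 1$. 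The same theorem provides an explicit controller in observer-plus-state-feedback form driven by the innovation $y_t - \widehat C \xi_t$, with gains $K_u,K_w$ from $K_c$ and $P = P_e(I - P_c P_e)^{-1}$, which matches the controller displayed in the statement. Combining these two steps gives the claimed equivalence and the explicit controller.

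The main obstacle is the spectral factorization: unlike the energy-energy case where the weight is $\gamma^2 I_p$, the pathlength weight $D_p^* D_p$ is rank-deficient (constant signals lie in its kernel), so the factor $\Delta_2$ cannot be obtained by a direct Riccati on $(\widetilde A,B_w)$ alone. The fix is the state augmentation that puts a $-I_p$ block into $\widetilde B_w$ and a cross-term $\widetilde S$ into $\Lambda_2(P_2)$; the delicate points are verifying stabilizability of the augmented $(\widetilde A,\widetilde B_w)$ and checking that the stabilizing DARE solution actually yields a causal, causally invertible $\Delta_2$. Once the factorization is correct, the remainder of the proof is the same reduction-to-$H_\infty$ argument as before, now on a state space of dimension $2n+p$.
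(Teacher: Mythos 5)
Your proposal does not prove the statement it was asked to prove. The statement is Lemma~\ref{general-equivalence-class-lemma}, a purely algebraic transfer-function identity about the center matrix $\Omega(W)$; your writeup is instead a sketch of the proof of Theorem~\ref{pathlength-w-energy-v-measurement-feedback-thm} (reduction of the pathlength/energy regret bound to $H_\infty$ measurement-feedback synthesis via spectral factorization and the two indefinite DAREs). Nothing in your argument touches the identity itself. Worse, the route you describe explicitly invokes Lemma~\ref{equivalence-class-lemma-1}, which the paper obtains as a special case (the ``transpose'') of the very lemma in question, so as an argument for Lemma~\ref{general-equivalence-class-lemma} it would also be circular.

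What is actually required is a one-line direct computation, which is exactly what the paper does. Write
$$\Omega(W) = \begin{bmatrix} F_1 \\ H_1 \end{bmatrix} W \begin{bmatrix} F_2^* & H_2^* \end{bmatrix} - \begin{bmatrix} I \\ 0 \end{bmatrix} W \begin{bmatrix} I & 0 \end{bmatrix},$$
and use the resolvent identities
$$\begin{bmatrix} H_1(zI - F_1)^{-1} & I \end{bmatrix}\begin{bmatrix} F_1 \\ H_1 \end{bmatrix} = z\, H_1(zI - F_1)^{-1}, \qquad \begin{bmatrix} F_2^* & H_2^* \end{bmatrix} \begin{bmatrix} (z^{-1}I - F_2^*)^{-1}H_2^* \\ I \end{bmatrix} = z^{-1}(z^{-1}I - F_2^*)^{-1}H_2^*,$$
which follow from $F_1 = (zI - F_1 )\cdot 0 + F_1$ rewritten as $(zI-F_1)^{-1}F_1 + I = z(zI-F_1)^{-1}$ and its counterpart in $z^{-1}$. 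The factors $z$ and $z^{-1}$ cancel, so the contribution of the first term equals $H_1(zI-F_1)^{-1} W (z^{-1}I - F_2^*)^{-1}H_2^*$, which is exactly the contribution of the second term, and the difference is zero. You should supply this computation (or an equivalent partial-fraction/Lyapunov-telescoping argument) rather than the $H_\infty$ reduction, which belongs to a different theorem.
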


\begin{proof}
Notice that $\Omega(W)$ can be rewritten as $$\Omega(W) = \begin{bmatrix} F_1 \\ H_1 \end{bmatrix} W \begin{bmatrix} F_2 & H_2 \end{bmatrix} - \begin{bmatrix} I \\0 \end{bmatrix} W \begin{bmatrix} I & 0 \end{bmatrix}.$$
The proof is immediate after observing that $$\begin{bmatrix} H_1(zI - F_1)^{-1} & I \end{bmatrix}\begin{bmatrix} F_1 \\ H_1 \end{bmatrix} = H_1(zI - F_1)^{-1}z,$$ 
$$\begin{bmatrix} F_2 & H_2 \end{bmatrix} \begin{bmatrix} (z^{-1}I - F_2^*)^{-1}H_2^* \\ I \end{bmatrix} = z^{-1}(z^{-1}I - F_2^*)^{-1}H_2^*.$$
\end{proof}

\subsection{A state-space model for the $H_{\infty}$-optimal measurement-feedback controller}

\begin{theorem}[Theorem 13.3.5 in \cite{hassibi1999indefinite}] \label{hinf-measurement-feedback-thm}
A causal measurement-feedback controller $K$ such that $$\|T_K\| < 1 $$ exists if and only if the control DARE $$P_c = A^*P_cA + L^*L - K_c^* R_c K_c$$ and the estimation DARE 
$$P_e = AP_e A^* +    B_w B_w^* - K_e R_e K_e^*,$$ where we define 
$$\begin{cases}
K_c = R_c^{-1}\begin{bmatrix} B_u^* \\  B_w^* \end{bmatrix}P_c A \\
\\
R_c = \begin{bmatrix} I_m & 0 \\ 0 & -I_p \end{bmatrix} + \begin{bmatrix} B_u^* \\ 
B_w^* \end{bmatrix}P_c \begin{bmatrix} B_u &    B_w \end{bmatrix}, \\
\\
K_e = AP^d \begin{bmatrix}  C^* & L^* \end{bmatrix}R_e^{-1}, \\
\\
R_e = \begin{bmatrix} I_r & 0 \\ 0 & -I_n \end{bmatrix} + \begin{bmatrix}  C \\ L \end{bmatrix}P_c \begin{bmatrix}  C^* & L^* \end{bmatrix}, 
\end{cases} $$  have solutions $P_c \succeq 0$ and $P_e \succeq 0$ such that 
\begin{enumerate}
    \item The matrix $ A - \begin{bmatrix} B_u &    B_w \end{bmatrix}K_c$ is stable.
    \item The matrix $R_c$ has $m$ positive eigenvalues and $p$ negative eigenvalues.
    \item The matrix $A - K_e\begin{bmatrix}  C \\ L \end{bmatrix}$ is stable.
    \item The matrix $R_e$ has $r$ positive eigenvalues and $n$ negative eigenvalues.
    \item $\rho(P_c P_e) < 1$.
\end{enumerate}
If these conditions are satisfied, then one possible choice of $K$ is given by $$u_t = - K_u (\widehat{x}_t + P C^*(I_r +  C P C^*)^{-1}(y_t - C \widehat{x}_t)),$$ where the state-estimate $\widehat{x}_t$ is given by the recursion $$\widehat{x}_{t+1} = (A - B_wK_w)(\widehat{x}_t + P C^*(I_r +  C P C^*)^{-1}(y_t - C \widehat{x}_t)) + B_u u_t,$$ and $K_u \in \mathbb{R}^{m \times n}$ and $K_w \in \mathbb{R}^{p \times n}$ are defined as $$\begin{bmatrix} K_u \\ K_w \end{bmatrix} = K_c,$$ and we define $P \in \mathbb{R}^{n \times n}$ as $$ P = P_e(I_n - P_c P_e)^{-1}. $$
\end{theorem}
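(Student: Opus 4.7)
The plan is to follow the Krein-space indefinite-quadratic-game approach of Hassibi, Sayed, and Kailath. I would first recast $\|T_K\|<1$ as the statement that the signed quadratic form
$$\|s\|_2^2 + \|u\|_2^2 - \|w\|_2^2 - \|v\|_2^2$$
is strictly negative for every nonzero $(w,v)$ when $u = Ky$. This turns the synthesis problem into a causal dynamic minimax game in which the controller minimizes and the adversary maximizes over $(w,v)$; existence of a strictly contractive $K$ is equivalent to strict solvability of this saddle.

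Next I would solve the full-information version of the game by backward dynamic programming, completing squares jointly in $u_t$ and $w_t$ at each step. This produces the control DARE for $P_c$, the gain $K_c$, and the signed matrix $R_c$ whose identity block matches the minimizer $u$ and whose negated-identity block matches the maximizer $w$. The required inertia of $R_c$ is exactly the condition that each stagewise completion of squares yields a genuine saddle, and stability of $A - \begin{bmatrix} B_u & B_w \end{bmatrix}K_c$ selects the unique stabilizing $P_c \succeq 0$. Dualizing to the output side, the same procedure applied to an indefinite Krein-space state-estimation problem for $x_t$ from $y$ (augmented with a fictitious measurement driven by $L$) yields the estimation DARE for $P_e$, the gain $K_e$, and the signed matrix $R_e$, whose inertia is the Krein-space analogue of positivity of the innovation covariance.

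To combine the two halves, I would invoke the certainty-equivalence principle for indefinite games: the optimal measurement-feedback action at time $t$ applies the full-information gain $K_u$ to the Krein-space a posteriori state estimate $\widehat{x}_t + P C^*(I_r + C P C^*)^{-1}(y_t - C\widehat{x}_t)$, where the combined covariance $P = P_e(I_n - P_c P_e)^{-1}$ arises from fusing the two value functions through their exponential-of-quadratic-form interpretation. The spectral condition $\rho(P_c P_e)<1$ is precisely what ensures that $P$ is well-defined and that the combined indefinite form retains the correct inertia; under it, sufficiency follows by algebraic substitution into the closed-loop Popov function using the two DARE identities to collapse cross terms.

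The main obstacle is the necessity direction. Sufficiency is essentially an algebraic verification, but necessity requires showing that if any causal $K$ achieves $\|T_K\|<1$, then both DAREs admit stabilizing solutions of the prescribed inertia and $\rho(P_c P_e)<1$. The standard argument proceeds by $J$-spectral / $J$-lossless factorization of the associated Popov operator together with a monotonicity argument for the finite-horizon Riccati iterates, whose limits must coincide with the stabilizing solutions of the two DAREs; the coupling condition $\rho(P_c P_e)<1$ then drops out from requiring the fused indefinite form to be nondegenerate. Since the statement is quoted as Theorem 13.3.5 of \cite{hassibi1999indefinite}, I would ultimately appeal to the Krein-space estimation and control theorems developed in that monograph rather than reproduce the full factorization argument here.
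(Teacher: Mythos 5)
The paper does not prove this statement at all: it is imported verbatim as Theorem 13.3.5 of the cited Hassibi--Sayed--Kailath monograph, and your outline follows precisely that monograph's Krein-space, indefinite-quadratic-game route (full-information control DARE, dual estimation DARE, certainty equivalence through the a posteriori estimate, and the coupling condition $\rho(P_c P_e) < 1$) before deferring to the same source for the necessity direction. Your treatment is therefore correct and essentially the same as the paper's, which simply cites this established $H_{\infty}$ measurement-feedback result.
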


While Theorem \ref{hinf-measurement-feedback-thm} tells us how to determine whether there exists a controller $K$ such that $\|T_K\| < 1$, it does not directly answer the more general question if there exists a controller $K$ such that $\|T_K\| < \gamma$, for any fixed $\gamma > 0$. This condition is equivalent to $\|T_{\widehat{K}}\| < 1$, where we define $T_{\widehat{K}} = \gamma^{-1}T_K$. Notice that
$$ T_{\widehat{K}} =  \begin{bmatrix} \gamma^{-1}G & 0 \\ 0 & 0 \end{bmatrix}  + \begin{bmatrix}F \\ I \end{bmatrix} (\gamma^{-1}Q) \begin{bmatrix} J & I \end{bmatrix}.$$
Recall that $Q = K(I - HK)^{-1}$, implying that $\gamma^{-1}Q = (\gamma^{-1}K) [I - (\gamma H) (\gamma^{-1}K)]^{-1}$. It follows that the controller $K$ satisfying $\|T_K\| < \gamma$ in the system $F, G, H, J$ is precisely $\gamma \widehat{K}$, where $\widehat{K}$ is the controller satisfying $\|T_{\widehat{K}}\| < 1$ in the system $\widehat{F}, \widehat{G}, \widehat{J}, \widehat{H}$ and $\widehat{F} = F, \widehat{G} = \gamma^{-1}G, \widehat{H} = \gamma H, \widehat{J} = J$.  We can assign state-space structure to $\widehat{F}, \widehat{G}, \widehat{H}, \widehat{J}$ as follows. Recall that $$F(z) = L(zI - A)^{-1}B_u, \hspace{3mm} G(z) = L(zI - A)^{-1}B_w,$$ $$H(z) = C(zI - A)^{-1}B_u, \hspace{3mm} J(z) = C(zI - A)^{-1}B_w.$$ Define $$\widehat{C} =  \gamma C,  \hspace{3mm}  \widehat{B}_w = \gamma^{-1}B_w.$$ We have
$$\widehat{F}(z) =  L(zI - A)^{-1}\widehat{B}_u, \hspace{3mm} \widehat{G}(z) = L(zI - A)\widehat{B}_w, $$ $$ \widehat{H}(z) = \widehat{C}(zI - A)^{-1}B_u,  \hspace{3mm} \widehat{J}(z) = \widehat{C}(zI - A)\widehat{B}_w.$$  We can hence use Theorem \ref{hinf-measurement-feedback-thm} to check the existence of a controller $K$ such that $\|T_K\| < \gamma$, for any $\gamma > 0$.

\end{document}